\DeclareMathOperator{\Hom}{Hom}
\DeclareMathOperator{\Ext}{Ext}
\DeclareMathOperator{\dHom}{\dim Hom}
\DeclareMathOperator{\dExt}{\dim Ext}
\newcommand{\N}{\mathbb{N}}
\newcommand{\frob}{^{\mathrm F}}
\DeclareMathOperator{\Ind}{Ind}
\newcommand{\GL}{\mathrm{GL}}
\newcommand{\SL}{\mathrm{SL}}
\newcommand{\D}{\Delta}
\newcommand{\ve}{\varepsilon}
\newcommand{\cM}{\mathcal{M}}
\newcommand{\cN}{\mathcal{N}}
\newcommand{\cL}{\mathcal{L}}
\newcommand{\alphac}{\alpha\check{\ }\,}
\begin{document}
%\swapnumbers
\theoremstyle{plain}
\numberwithin{subsection}{section}
\numberwithin{equation}{subsection}
\newtheorem{thm}{Theorem}[subsection]
\newtheorem{prop}[thm]{Proposition}
\newtheorem{cor}[thm]{Corollary}
\newtheorem{clm}[thm]{Claim}
\newtheorem{lem}[thm]{Lemma}
\newtheorem{conj}[thm]{Conjecture}
\theoremstyle{definition}
\newtheorem{defn}[thm]{Definition}
\newtheorem{rem}[thm]{Remark}
\theoremstyle{definition}
\newtheorem{eg}[thm]{Example}

\title{Dimensions of higher extensions for {$\SL_2$}}
\author{Karin Erdmann}
\address{Mathematical Institute, 24-29 St. Giles',  Oxford OX1 3LB, UK}
\email{erdmann@maths.ox.ac.uk}
\author{Keith C. Hannabuss}
\address{Balliol College,   Oxford OX1 3BJ, UK}
\email{kch@balliol.ox.ac.uk}
\author{Alison E. Parker}
\address{School of Mathematics, University of Leeds, Leeds
LE2 9JT, UK}
\email{a.e.parker@leeds.ac.uk}

% The main text

%\keywords{algebraic group, {S}chur algebra, general
%linear group, quasi-hereditary algebra}
%\subjclass[2000]{20G05 and 20G10}

\begin{abstract}
We analyse the recursive formula found for various Ext groups for
$\SL_2(k)$, $k$ a field of characteristic $p$, and
derive various generating functions for these groups. We use
this to show that the growth rate for the cohomology of $\SL_2(k)$ is
at least
exponential. In particular, $\max \{ \dim \Ext^i_{\SL_2(k)}(k,
\Delta(a))\mid a,i \in \N \}$ has (at least) exponential growth for all $p$.
We also show that $\max \{ \dim \Ext^i_{\SL_2(k)}(k, 
\Delta(a))\mid a\in \N \}$ for a fixed $i$ is bounded.
\end{abstract}

\maketitle

\section*{Introduction}

A very general open problem in the characteristic $p$ representation
theory of algebraic groups is to determine the higher extension groups
 $\Ext_G^q(M, N)$ where $M, N$ are Weyl modules,
or simple modules (or more generally if possible).
In \cite{parker3} the third author found recursive formula for many
different $\Ext$ groups for modules in $\SL_2(k)$, $k$ an
algebraically closed field of characteristic $p$.
But no closed formula were found. More recently there has been
interest in finding upper bounds for the dimensions of $\Ext$ groups
(see for example \cite{PSboundext}). Work of  \cite{stewart} applied
the results of
\cite{parker3} to show that the growth rates of $\Ext^i_{\SL_2(k)}(k,L)$,
taken over all $L$ a simple module and $i \in \N$ is at least exponential.  
Several years ago, just after \cite{parker3} had been done, we had found
some generating functions for the dimensions of $\Ext$ groups for Weyl modules,
to investigate how large these could be, (mentioned in
\cite[section 6]{coxpar}). The recent work of
Parshall and Scott in \cite{PSboundext} and
\cite{PSisrael}, and of Stewart~\cite{stewart},
has encouraged us
to % write this up.
polish this work up to explore further questions raised by these
authors. In particular, using our generating functions, we have got an
analogue of the exponential growth found by Stewart \cite{stewart} for
all primes, but using Weyl modules rather than simples. We also show
that when we fix $i$ 
that $\max \{ \dim \Ext^i_{\SL_2(k)}(k, 
\Delta(a))\mid a\in \N \}$ is bounded (see
section \ref{sect:boundext}).
%[Say something about his theorem]

For prime $2$ we have an explicit formula for
the dimension of $\Ext^n_{\SL_2(k)}(\Delta(0), \Delta(a))$.
When $a$ is odd this is zero by block considerations,
 so let $a=2d$. We show that this is equal to 
the number of  partitions $(b_0, b_1, \ldots, b_n)$      
such that $\sum_i 2^{b_i}= d+1$ (see corollary \ref{cor:p2dimext}).
These are compatible with Stewart's results.

For $p>2$ the situation is more complicated. 
Our generating function $G(s)$ 
can be written as 
$\sum_{n\geq 0} z^nh_n(s)$ with $h_n(s)$ a power series, 
and the coefficient of $s^d$ in 
$h_n(s)$ is the dimension of 
 $\Ext^n_{\SL_2(k)}(\Delta(0), \Delta(2d))$. 
We have a recursion for $sh_n(s)$, given in 
\ref{subsect:shlabels} 
and an algorithm which is described in \ref{subsect:genn}.
Exponential growth is established for $p=2$ in Propositon
\ref{prop:dim} and  for odd primes in Lemma
\ref{lem:expgrowth} together with Lemma \ref{lem:growthbinom}. 
That is, for a fixed prime, if we let $d$ vary with $n$, then the dimensions 
of $\Ext^n_{\SL_2(k)}(\Delta(0), \Delta(2d))$ grow exponentially.

\section{Preliminaries}

\subsection{Notation}
We first briefly review some of the notation and definitions that we
will use in this paper. 
The reader is referred to \cite{jantz2} for further information. 
We let $G=\SL_2(k)$ where $k$ is an algebraically closed field of
characteristic $p$,
 and $\mathrm{F}:G \to G$ the corresponding
Frobenius morphism. We may ``twist'' $G$-modules via this morphism.
We let $X^+$ be the set of dominant weights which may be identified
with $\N$, the non-negative integers. 

For $\lambda \in X^+$, let $k_\lambda$ be the one-dimensional module
for $B$ a suitable Borel, which has weight $\lambda$. We define
$\nabla(\lambda)= \Ind_B^G(k_\lambda)$.
This module has character given by Weyl's character formula and has
simple socle $L(\lambda)$, the irreducible $G$-module of highest weight
$\lambda$. In the case of $\SL_2$ all simples are known via
Steinberg's tensor product theorem.
If we let $E$ be the 2-dimensional natural module for $\SL_2(k)$, then
$\nabla(\lambda) = S^\lambda E$, the $\lambda$th symmetric power of
$E$.
We will also use Weyl modules $\D(\lambda)$ which for our purposes can be either
thought as divided powers, so $\D(\lambda) = D^\lambda E$ or as duals
of induced modules: $\D(\lambda) = \nabla(\lambda)^*$, where $^*$ is
the usual $k$-linear dual.

The category of rational $G$-modules has enough injectives and so we may
define $\Ext^*(-,-)$ as usual by using injective resolutions.

\subsection{ Background }  
 %Maybe one should mention Schur algebra and say that for a fixed degree
%the global dimension is finite. Or better translate to highest weight
%category and say that the category where the ht wt is bounded has finite global
%dimension.]
Past work of the third author \cite{parker1, parker2},  was
concerned with finding explicit bounds on the global dimension of the
Schur algebra associated to polynomial modules for $\GL_n(k)$. 
Of course it was known that such a bound should exist as the category
of $G$-modules with bounded highest weight is an example of what is
now known as a high weight category (or equivalently, is the module
category of a quasi-hereditary algebra) and as such has finite global
dimension.
The work of \cite{parker2} showed that for any algebraic group that
$$
\Ext^m_G(L(w\cdot \lambda), L(v\cdot \lambda)) = 
\Ext^m_G(\nabla(w\cdot \lambda), \Delta(v\cdot \lambda)) = 
\begin{cases}
0 & \mbox{if $m > l(w) + l(v)$,}\\
k & \mbox{if $m = l(w) + l(v)$}
\end{cases}
$$
where $\lambda$ is in in the interior of the fundamental alcove, 
$w \in W_p$, the affine Weyl group acting via the dot action on
the dominant weights and $l:W_p \to \N$ is the usual length
function on (the Coxeter group) $W_p$. 
We also have:
$$
\Ext^m_G(\D(w\cdot \lambda), \D(v\cdot \lambda)) = 
\begin{cases}
0 & \mbox{if $m > l(v) - l(w)$,}\\
k & \mbox{if $m = l(v) - l(w)$}
\end{cases}
$$
with the same notation as above.

For $\SL_2(k)$ this was proved more directly in \cite{parker1} and
can be summarised as follows:
$$
\Ext^m_{\SL_2(k)}(L(pa+i), L(pb + j)) = 
\begin{cases}
0 & \mbox{if $m > a +b $,}\\
k & \mbox{if $m = a+b$ and $pa+i$ and $pb+j$ are in the same
  $W_p$-orbit}
\end{cases}
$$
where $a$, $b$, $i$, $j \in \N$ and $ 0 \le i, j \le p-2$.
We also have:
$$
\Ext^m_{\SL_2(k)}(\D(pa+i), \D(pb + j)) = 
\begin{cases}
0 & \mbox{if $m > b -a $}\\
k & \mbox{if $m = b-a$ and $pa+i$ and $pb+j$ are in the same
  $W_p$-orbit}.
\end{cases}
$$

NB: The condition for $pa+i$ and $pb+j$ to be in the same $W_p$ orbit
is that either $a-b$ is even and $i=j$ or $a-b$ is odd and $i =
p-2-j$.
The case that will be of most use in what follows is that when $pa+i
=0$. Then $pb+j$ is in the same $W_p$-orbit (=$G$-block) as $0$ when
$b$ is even and $j=0$ or $b$ is odd and $j=p-2$.

Thus we know that most of the $\Ext$ groups are zero and 
we also know what some of the lower dimensional groups between Weyl
modules are for $\SL_2(k)$ thanks to
work of the first author  \cite{erd1}, and of Cox and the first author
\cite{coxerd}. We won't cite these results directly, but note that 
\begin{align*}
\dHom_{\SL_2(k)}(\D(pa+i), \D(pb+j)) &\le 1,\\
\dExt^1_{\SL_2(k)}(\D(pa+i), \D(pb+j)) &\le 1\\
\intertext{and}
\dExt^2_{\SL_2(k)}(\D(pa+i), \D(pb+j)) &\le 2.\\
\end{align*}
This of course rather does beg the question, can we calculate the
other $\Ext$ groups and can we find bounds on their dimensions?

The work of \cite{parker3} allows us in theory to calculate these
$\Ext$ groups but with recusive formula. While this is easy to program
into a computer, this formulation has not proved useful so far for
more theoretical results.
%(Great for computers, not so user friendly for people.)
This paper is our attempt to put the
recursive formula of \cite{parker3} into a form which will allow us
(or others) to answer such questions as is
$\dExt^m_{\SL_2(k)}(\D(pa+i),\D(pb+j))$ bounded? If it isn't, what's
the growth rate like? The rest of the paper is devoted to applying the
theory of generating functions  to the recursions in order to give
partial answers to some of
these  questions.

\section{Some recursions} \label{sect:somerec}
Henceforth all $\Ext$ groups will be over $\SL_2(k)$ so we will drop
the subscript. (The algebraic group in any case should be self-evident from the
highest weights of the modules involved.)

We first note that for all $p$,
\begin{align}\label{eq:hom}
\Ext^m(\D(0), \D(0)) &= 0 \mbox{ for $m\geq 1$ }\\
\Hom(\D(0), \D(2b)) &= 
\begin{cases}
k & \mbox{if $b=p^{r}-1$, $r \in \N$,} \\
0 & \mbox{ otherwise.}
\end{cases}
\end{align}
This is well-known, see for example
\cite[(1.5)(3)]{erd1}, although it can also be derived using the
recursions below. 

\subsection{The recursion for $p=2$ } 
When $p=2$ we apply Corollary 4.2 (or Corollary 5.2) in
\cite{parker3}, 
with $N= \Delta(0)$
which is fixed by the Frobenius twist: For $d\geq 1$ and $m\geq 1$ we
have
\begin{equation}\label{eq:recp=2}
\Ext^m(\D(0), \D(2b))\cong \Ext^{m-1}(\D(0), \D(2(b-1)))\oplus
\Ext^m(\D(0), \D(b-1)).
\end{equation}
Note that when $b$ is even, the weights $0$ and $b-1$ are in different
blocks and the second summand is zero.

\subsection{The recursion for $p>2$}
By \cite[Corollary 4.2 or 5.2]{parker3}  we have
\begin{equation}\label{eq:recpnot2}
\Ext^m(\Delta(0), \Delta(2bp)) \cong \Ext^{m-1}(\Delta(0),
\Delta(2bp-2))
\end{equation}
and
$$\Ext^m(\Delta(0), \Delta(2bp-2))\cong \Ext^{m-1}(\Delta(0), \Delta(2(b-1)p))
\oplus \Ext^m(\Delta(0), \Delta(2(b-1))).
$$

\section{The generating function for $p=2$}

In  this section $k$ has characteristic $2$.

\subsection{} We fix an integer $d\geq 0$. Set
$$\ve(2d):= \sum_{m\geq 0} \dim \Ext^m(\Delta(0), \Delta(2d))z^m
$$
which is a polynomial in $z$. 

We translate \eqref{eq:recp=2} into a recursion. Firstly,
$\ve(0)=1$. Next we have:
$$\ve(2d) = \left\{\begin{array}{ll} z\ve(2(d-1))& \mbox{if $d$
      even, $>0$;}\\
 z\ve(2(d-1)) + \ve(d-1) & \mbox{if $d$ odd.}
\end{array}
\right.
$$
We define the generating function, $G(s)$, for the $\ve(2d)$'s by
$$G(s)= \sum_{d\geq 0} s^d\ve(2d).
$$
We now give a functional equation satisfied by $G(s)$.
\begin{lem}\label{lem:functeqp=2}
We have 
$$(1-zs)G(s) = sG(s^2) + 1.
$$ 
\end{lem}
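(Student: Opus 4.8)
The plan is to prove the identity by comparing coefficients of $s^d$ on both sides, using the two-case recursion for $\ve(2d)$ directly. Everything in sight is a formal power series in $s$ with polynomial-in-$z$ coefficients, so it suffices to establish equality coefficient by coefficient, and questions of convergence do not arise.

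First I would expand the left-hand side. From $G(s)=\sum_{d\ge 0}s^d\ve(2d)$ one gets
$$
zs\,G(s) = \sum_{d\ge 0} z\,s^{d+1}\ve(2d) = \sum_{d\ge 1} z\,s^{d}\ve(2(d-1)),
$$
so that
$$
(1-zs)G(s) = \ve(0) + \sum_{d\ge 1} s^{d}\bigl(\ve(2d) - z\,\ve(2(d-1))\bigr).
$$
The recursion is designed precisely to evaluate the bracketed expression: for $d$ even and positive it gives $\ve(2d)-z\ve(2(d-1))=0$, whereas for $d$ odd it gives $\ve(2d)-z\ve(2(d-1))=\ve(d-1)$. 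Hence only odd indices survive, leaving
$$
(1-zs)G(s) = \ve(0) + \sum_{\substack{d\ge 1\\ d\ \mathrm{odd}}} s^{d}\,\ve(d-1).
$$

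The decisive step is the re-indexing of this odd-index sum. Writing $d=2e+1$ with $e\ge 0$ gives $d-1=2e$, so the sum becomes $\sum_{e\ge 0}s^{2e+1}\ve(2e)=s\sum_{e\ge 0}(s^2)^e\ve(2e)=sG(s^2)$, recognising the generating function evaluated at $s^2$. Combined with $\ve(0)=1$ this yields $(1-zs)G(s)=1+sG(s^2)$, as claimed.

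The only genuine subtlety — and the reason the argument of $G$ gets squared — is this last re-indexing: the odd case of the recursion expresses $\ve(2d)$ in terms of $\ve(d-1)=\ve(2e)$ rather than $\ve(2(d-1))$, effectively halving the weight, and it is this halving that converts the index $e$ appearing through $s^{2e}$ into $s^2$ inside $G$. I would double-check the parity bookkeeping at this point (that even $d$ really contributes nothing and that every odd $d\ge 1$ is hit exactly once by $d=2e+1$ with $e\ge 0$), since that is where an off-by-one or parity slip could enter; the remaining algebra is routine.
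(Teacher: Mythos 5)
Your proof is correct and follows essentially the same route as the paper's: both substitute the two-case recursion for $\ve(2d)$ and re-index the surviving odd-index sum via $d=2e+1$ to recognise $sG(s^2)$ (the paper substitutes into $G(s)$ directly and then identifies $szG(s)$, whereas you move that term to the left first, which is the same computation). The parity bookkeeping you flag is handled correctly.
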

\begin{proof}
By substituting the recursion formula, we get
\begin{equation}\label{eq:genp=2}
G(s) = 1 + \sum_{d\geq 1}zs^d\ve(2(d-1)) + \sum_{d \rm{\ odd}} s^d\ve(d-1)
\end{equation}
We can write this as 
\begin{align*} G(s)=& 1 + sz(\sum_{d\geq 1} s^{d-1}\ve(2(d-1))) + sG(s^2)\cr
=& 1+ szG(s) + sG(s^2).\qedhere
\end{align*}
\end{proof}

\subsection{} We would like to find  the coefficient of $z^m$ in $G(s)$.
This is a power series in $s$, and the coefficient of $s^d$ is $\dim
\Ext^m(\Delta(0), \Delta(2d))$.
We write 
$$
G(s) = \sum_{m\geq 0} z^mg_m(s).
$$

Assume first that $m=0$. By equation \eqref{eq:hom}  we know that
$$
g_0(s) = 1 + s + s^3 + s^7 + \ldots = \sum_{k\geq 0} s^{2^k-1}
$$
Note that this is consistent with setting $z=0$ in equation
\eqref{eq:genp=2}.

\begin{lem}\label{lem:sgnp=2}
We have
the recursion
$$
g_{n}(s) - sg_{n-1}(s) = sg_n(s^2)
$$
Furthermore for $n\geq 1$ we have
$$
sg_n(s) = \sum_{k=0}^{\infty} s^{2^{k}}(s^{2^k}g_{n-1}(s^{2^k})).
$$
\end{lem}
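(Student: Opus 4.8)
The plan is to derive both statements from the functional equation $(1-zs)G(s)=sG(s^2)+1$ established in Lemma \ref{lem:functeqp=2}. I would substitute $G(s)=\sum_{m\geq 0}z^m g_m(s)$ into that equation and compare coefficients of $z^n$ on both sides. Writing $(1-zs)G(s)=\sum_m z^m g_m(s) - \sum_m z^{m+1} s g_m(s)$, the coefficient of $z^n$ on the left-hand side is $g_n(s)-sg_{n-1}(s)$ for $n\geq 1$. On the right-hand side, the constant $1$ contributes nothing to $z^n$ for $n\geq 1$, and $sG(s^2)=s\sum_m z^m g_m(s^2)$ contributes $s g_n(s^2)$ to the coefficient of $z^n$. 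Equating these gives the first recursion
$$
g_n(s)-sg_{n-1}(s)=sg_n(s^2),
$$
exactly as claimed.

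For the second formula I would iterate the first recursion. Rewriting it as $g_n(s)=s g_{n-1}(s)+s g_n(s^2)$ and multiplying through by $s$ gives
$$
sg_n(s)=s\bigl(sg_{n-1}(s)\bigr)+s^2 g_n(s^2).
$$
The natural strategy is to substitute $s\mapsto s^2$ repeatedly into the recursion to expand the term $g_n(s^2)$, then $g_n(s^4)$, and so on, peeling off one summand at each stage. At the $k$th step of this unwinding I expect to produce the term $s^{2^k}\bigl(s^{2^k}g_{n-1}(s^{2^k})\bigr)$, matching the summand indexed by $k$ in the stated series. I would make this precise by proving by induction on $K$ that
$$
sg_n(s)=\sum_{k=0}^{K} s^{2^k}\bigl(s^{2^k}g_{n-1}(s^{2^k})\bigr)+s^{2^{K+1}}g_n(s^{2^{K+1}}),
$$
where the inductive step applies the first recursion (with $s$ replaced by $s^{2^{K+1}}$) to the remainder term.

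The main obstacle is the convergence argument needed to pass from the finite identity to the infinite series, that is, to show the remainder $s^{2^{K+1}}g_n(s^{2^{K+1}})\to 0$ as $K\to\infty$. Since each $g_n$ is a power series in $s$ with $g_n(0)$ controlled by the hypothesis $n\geq 1$ (so that the lowest-degree term of $g_n$ has positive degree, ensuring $g_n(s^{2^{K+1}})$ starts in arbitrarily high degree), the remainder tends to zero in the $s$-adic topology on $k[[s]]$. I would therefore argue at the level of formal power series: for each fixed power $s^d$, only finitely many terms of the series contribute to its coefficient, so the infinite sum is well-defined and equals $sg_n(s)$ coefficientwise. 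The hypothesis $n\geq 1$ is exactly what guarantees $g_n$ has no constant term, which is needed both for convergence and so that the remainder vanishes in the limit.
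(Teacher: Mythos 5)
Your proposal is correct and follows essentially the same route as the paper: the first recursion by comparing coefficients of $z^n$ in the functional equation of Lemma \ref{lem:functeqp=2}, and the second by multiplying by $s$ and telescoping the substitutions $s\mapsto s^{2^k}$ (your finite identity with remainder is exactly the partial sum of the paper's list of telescoping equations, with the $s$-adic convergence made explicit). The only nitpick is that the vanishing of the remainder already follows from the prefactor $s^{2^{K+1}}$ regardless of whether $g_n$ has a constant term; the hypothesis $n\geq 1$ is really needed so that the constant $1$ drops out of the first recursion and so that $g_{n-1}$ makes sense.
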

\begin{proof}
Recall that $(1-zs)G(s) = 1+sG(s^2)$. 
The left hand side of this is
$$
\sum_{n\geq 0} z^ng_n(s) - \sum_{n\geq 0} z^{n+1}sg_n(s)
$$
and the right hand side is 
$$
1 + \sum_{n\geq 0} z^nsg_n(s^2).
$$
Equating the coefficients of $z^n$ gives the first part.
For the second part we
multiply with $s$ and then substitute $s^{2^k}$. This gives:
\begin{align*}
sg_n(s) - s^2g_n(s^2) &= s^2g_{n-1}(s)\\
s^2g_n(s^2) - s^4g_n(s^4)&= s^4g_{n-1}(s^2)\\
s^4g_n(s^4) - s^8g_n(s^8)&= s^8g_{n-1}(s^4)\\
&\vdots 
\end{align*}
Adding  these together then gives the statement. 
\end{proof}

\begin{cor}\label{cor:p2dimext}
We can write 
$$
sg_n(s) = \sum_{\beta} 
s^{2^{b_n} + 2^{b_{n-1}} + \cdots + 2^{b_0}}
$$
where the sum is over all 
partitions $\beta = (b_0, b_1, \ldots, b_n)$ of $m$, $m$ taken over
all $\N$.
In particular,
$\dim\Ext^n(k,\Delta(2d))$ is 
equal to 
the number of  partitions $(b_0, b_1, \ldots, b_n)$      
such that $\sum_i 2^{b_i}= d+1$.
\end{cor}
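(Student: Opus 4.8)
The plan is to prove the closed form for $sg_n(s)$ by induction on $n$ directly from the second recursion of Lemma~\ref{lem:sgnp=2}, and then to obtain the counting statement by reading off the coefficient of $s^{d+1}$. Throughout, I read a ``partition'' $\beta=(b_0,\dots,b_n)$ as a weakly decreasing tuple $b_0\geq b_1\geq\cdots\geq b_n\geq 0$ of nonnegative integers of length $n+1$ (so that grouping by the value $m=\sum_i 2^{b_i}$ recovers the formulation in the statement), and the identity to establish is
\[
sg_n(s)=\sum_{b_0\geq b_1\geq\cdots\geq b_n\geq 0} s^{2^{b_0}+2^{b_1}+\cdots+2^{b_n}} .
\]
Everything takes place in the ring of formal power series in $s$: for a fixed exponent $N=d+1$ the constraint $\sum_i 2^{b_i}=N$ forces each $b_i\leq\log_2 N$, so only finitely many tuples contribute to each coefficient and coefficient extraction is legitimate. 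The base case $n=0$ is immediate, since the formula $g_0(s)=\sum_{k\geq 0}s^{2^k-1}$ recorded just after \eqref{eq:genp=2} gives $sg_0(s)=\sum_{k\geq 0}s^{2^k}$, which is exactly the sum over the length-one partitions $(b_0)$.

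For the inductive step I would assume the identity for $n-1$ and feed it into $sg_n(s)=\sum_{k\geq 0}s^{2^k}\bigl(s^{2^k}g_{n-1}(s^{2^k})\bigr)$. Substituting $s\mapsto s^{2^k}$ into the inductive hypothesis for $sg_{n-1}$ yields
\[
s^{2^k}g_{n-1}(s^{2^k})=\sum_{b_0\geq\cdots\geq b_{n-1}\geq 0} s^{2^{b_0+k}+\cdots+2^{b_{n-1}+k}},
\]
and the reindexing $c_i:=b_i+k$ rewrites the right-hand side as $\sum_{c_0\geq\cdots\geq c_{n-1}\geq k} s^{2^{c_0}+\cdots+2^{c_{n-1}}}$. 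Multiplying by the outer factor $s^{2^k}$ then adjoins a new, smallest part $b_n:=k$, while the lower bound $c_{n-1}\geq k$ becomes precisely $b_{n-1}\geq b_n$. Summing over $k\geq 0$ should therefore produce each weakly decreasing tuple $b_0\geq\cdots\geq b_n\geq 0$ exactly once, which is the claimed identity for $n$.

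The one point that needs care — and the main obstacle — is verifying that this last passage is a genuine bijection: I would check that $(k,(c_0\geq\cdots\geq c_{n-1}\geq k))\mapsto(c_0,\dots,c_{n-1},k)$ is a bijection from the index set of the recursion onto the length-$(n+1)$ partitions, so that no tuple is produced twice and none is omitted (the inverse simply records the smallest part as $k$ and the remaining parts as the $c_i$). Granting the identity, the counting statement follows formally: the coefficient of $s^{d+1}$ in $sg_n(s)$ is the number of partitions $(b_0,\dots,b_n)$ with $2^{b_0}+\cdots+2^{b_n}=d+1$, while, as noted in the subsection preceding Lemma~\ref{lem:sgnp=2}, the coefficient of $s^{d}$ in $g_n(s)$ equals $\dim\Ext^n(\Delta(0),\Delta(2d))=\dim\Ext^n(k,\Delta(2d))$. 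Comparing the two coefficients gives the result.
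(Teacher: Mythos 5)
Your proposal is correct and follows essentially the same route as the paper: both arguments induct on $n$ using the identity $sg_n(s)=\sum_{k\geq 0}s^{2^k}\bigl(s^{2^k}g_{n-1}(s^{2^k})\bigr)$ from Lemma~\ref{lem:sgnp=2} and then identify the resulting index set with weakly decreasing $(n+1)$-tuples, the only cosmetic difference being that the paper unrolls the recursion into one multi-index sum over $(k_0,\dots,k_n)$ and reindexes via the cumulative sums $b_{n-i}=k_0+\cdots+k_i$, whereas you perform the equivalent reindexing one step at a time by adjoining the new smallest part $b_n=k$. The bijection you flag as the delicate point is exactly the one the paper asserts, and your verification of it is sound.
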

\begin{proof}
 By induction on $n$ we get from Lemma \ref{lem:sgnp=2} that
$$
sg_n(s) = \sum_{0\leq k_i} s^{2^{k_0} + 2^{k_1+k_0} + \cdots +  2^{k_n+k_{n-1} + \cdots k_1 + k_0}}.
$$
Substitute  $b_n=k_0$, $b_{n-1} = k_0+k_1$, and so on, until
$b_0 = \sum_{i=0}^n k_i$.
Then set $\beta = (b_0, b_1, \ldots, b_n)$, this is then a partition; and
all partitions with $n+1$ parts occur. 

The coefficient
of $s^d$ in $g_n(s)$ is equal to 
$\dim \Ext^n(\Delta(0), \Delta(2d)) $.
This is the coefficient of $s^{d+1}$ in $sg_n(s)$ and thus  it
is equal to  
the number of partitions $\beta$ with $n+1$ parts such that
$2^{b_n} + 2^{b_{n-1}} + \ldots + 2^{b_0} = d+1$.
\end{proof}

We also have a formula for $G(s)$ as a sum of rational functions. 
Although we will not apply it here, we include it for completeness.

\begin{lem}
Let $F(s) = \Pi_{k=0}^{\infty} (1-zs^{2^k})$. Then
we have
$$
F(s) sG(s) - F(s^2)s^2G(s^2) = sF(s^2)
$$
and 
$$
G(s) = (1-zs)^{-1}(1 + \frac{s}{(1-zs^2)} + \frac{s^3}{(1-zs^2)(1-zs^4)}
+ \ldots)
$$
\end{lem}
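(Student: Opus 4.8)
The plan is to derive both displayed equations from the functional equation of Lemma~\ref{lem:functeqp=2}, namely $(1-zs)G(s) = 1 + sG(s^2)$, together with one elementary observation about $F$. That observation is the self-similarity
$$
F(s) = (1-zs)F(s^2),
$$
which follows immediately from $F(s^2) = \Pi_{k=0}^\infty (1-zs^{2^{k+1}}) = \Pi_{k=1}^\infty (1-zs^{2^k})$: multiplying by the missing $k=0$ factor $(1-zs)$ recovers $F(s)$. Since $F(0)=1$, the series $F(s)$ is a unit in $(\Q[z])[[s]]$, so we may divide by it freely.

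For the first displayed equation I would multiply the functional equation through by $sF(s^2)$. Using the self-similarity, the left-hand side becomes $s\,(1-zs)F(s^2)\,G(s) = sF(s)G(s) = F(s)sG(s)$, while the right-hand side becomes $sF(s^2) + s^2F(s^2)G(s^2) = F(s^2)s^2G(s^2) + sF(s^2)$. Rearranging is then exactly the claimed identity $F(s)sG(s) - F(s^2)s^2G(s^2) = sF(s^2)$.

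For the second displayed equation I would set $H(s) := sF(s)G(s)$, so the identity just proved reads $H(s) - H(s^2) = sF(s^2)$. Substituting $s \mapsto s^{2^k}$ repeatedly gives $H(s^{2^k}) - H(s^{2^{k+1}}) = s^{2^k}F(s^{2^{k+1}})$, and summing over $k\geq 0$ telescopes to $H(s) = \sum_{k\geq 0} s^{2^k}F(s^{2^{k+1}})$. Dividing by $sF(s)$ and simplifying the ratio $F(s^{2^{k+1}})/F(s) = 1/\Pi_{j=0}^k(1-zs^{2^j})$ (again by telescoping the defining product of $F$) yields
$$
G(s) = \sum_{k\geq 0} \frac{s^{2^k-1}}{\Pi_{j=0}^k(1-zs^{2^j})},
$$
whose $k=0,1,2,\dots$ terms are $(1-zs)^{-1}$, $s(1-zs)^{-1}(1-zs^2)^{-1}$, $s^3(1-zs)^{-1}(1-zs^2)^{-1}(1-zs^4)^{-1}$, $\dots$; pulling the common factor $(1-zs)^{-1}$ out of every term gives the stated formula.

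The computation is essentially formal, so there is no deep obstacle; the only point requiring care is the justification of the infinite telescoping sum. Working in $(\Q[z])[[s]]$ with its $(s)$-adic topology, one checks that the $k$-th summand $s^{2^k}F(s^{2^{k+1}})$ has $s$-valuation $2^k$ and that the tail $H(s^{2^{k+1}})$ has valuation $2^{k+1}\to\infty$, so only finitely many terms contribute to each coefficient and the series converges. Once the ambient ring and topology are fixed, each convergence claim reduces to the observation that the relevant $s$-valuations grow without bound.
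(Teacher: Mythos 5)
Your argument is correct and follows essentially the same route as the paper: establish $F(s)=(1-zs)F(s^2)$, multiply the functional equation of Lemma~\ref{lem:functeqp=2} by $sF(s^2)$ to get the first identity, then telescope over $s\mapsto s^{2^k}$ and divide by $sF(s)$ to obtain the second. Your added remarks on $s$-adic convergence are a welcome extra precision but do not change the substance.
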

\begin{proof}
By the definition we have $F(s) = (1-zs)F(s^2)$.  Multiplying the
functional equation in Lemma \ref{lem:functeqp=2} with $s$ gives:
$$
(1-zs)sG(s) = s^2G(s^2) + s.
$$
Multiplying  this with $F(s^2)$ gives the first statement:
\begin{align*}
F(s)sG(s)&= (1-zs)F(s^2)sG(s)\\
&= F(s^2)s^2G(s^2) + sF(s^2).
\end{align*}
We now have
\begin{align*} 
F(s) sG(s)-F(s^2)s^2G(s^2)&=sF(s^2)\\
F(s^2)s^2G(s^2)-F(s^4)s^4G(s^4)&=s^2F(s^4)\\
F(s^4)s^4G(s^4)-F(s^8)s^8G(s^8)&=s^4F(s^4)\\
&\vdots \\
\end{align*}
We add these equations and get
$$
F(s)sG(s) = \sum_{k=0}^{\infty} s^{2^k}F(s^{2^{k+1}}).
$$
Hence we have
\begin{align*}
G(s)&= \frac{1}{s}\sum_{0}^{\infty} s^{2^k}\frac{F(s^{2^{k+1}})}{F(s)}
\\
&= \frac{1}{s}\left[\frac{sF(s^2)}{F(s)} + \frac{s^2F(s^4)}{F(s)} +
  \ldots\right]\\
&= \frac{1}{s}\left[\frac{s}{1-zs}+ \frac{s^2}{(1-zs)(1-zs^2)} +
  \ldots\right] \end{align*}
which gives the second part of the statement.
\end{proof}

\section{The generating function for $p>2$}

\subsection{} 
Let $z$ be a variable, 
we  define for a fixed even integer $2d$:
\begin{align*}
%a_r:= & \dim \Ext^r(\D(0), \D(n))\cr
\ve(2d):= &\sum_{m\geq 0} \dim\Ext^m(\D(0), \D(2d))z^m 
\end{align*}
which is a polynomial. 
Then $\ve(2d)$ can only be non-zero for $2d$ where $\D(2d)$ is in the
principal block, that
is $2d$ of the form $2kp$ or $2kp-2$.

We translate \eqref{eq:recpnot2} into a recursion for  the $\ve$. 
First, we have the initial  conditions:
$$
\ve(0)=  1  \ \mbox{ and }\ \ve(2p-2) = 1+z.
$$
Next, we have the following recursions
\begin{align}\label{eq:verecpnot2}
\ve(2kp) &= z\ve(2kp-2)  \ (k\geq 1)\\
\label{eq:verecpnot2sec}
\ve(2kp-2) &= z\ve(2(k-1)p)  + \ve(2(k-1)) \ (k\geq 1).
\end{align}

\subsection{}  Let $s$ be a variable, and set
$$G_0(s):= \sum_{k=0}^{\infty} \ve(2kp)s^{kp}, \ \ 
G_1(s):= \sum_{k=1}^{\infty} \ve(2kp-2)s^{kp-1}
$$
and $G(s)=G_0(s) + G_1(s)$, the generating function for the
$\ve$'s. From
\eqref{eq:verecpnot2}
and \eqref{eq:verecpnot2sec}
we get the following recursions
$$G_0(s) = 1 + zsG_1(s)$$
$$G_1(s) = zs^{p-1}G_0(s) + s^{p-1}G(s^p).$$
We write this in vector form. Let $\Phi(s):= \left(\begin{matrix}
G_0(s)\cr G_1(s)\end{matrix}\right)$, then 
\begin{equation}\label{eq:phi}
\Phi(s) = \begin{pmatrix}1\\ 0\end{pmatrix} + \begin{pmatrix} 0 & zs\cr
  zs^{p-1}&0\end{pmatrix}\Phi(s) 
+ s^{p-1}\begin{pmatrix}0\\ 1\end{pmatrix}\begin{pmatrix} 1 &
    1\end{pmatrix}\Phi(s^p)
\end{equation}

\begin{lem}\label{lem:geneqpnot2}
The generating function satisfies the functional equation
$$
(1-z^2s^p)G(s) = (1+zs^{p-1}) + s^{p-1}(1+zs)G(s^p).
$$ 
\end{lem}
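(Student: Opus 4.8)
The plan is to eliminate $G_0$ and $G_1$ from the two scalar recursions
$$G_0(s) = 1 + zsG_1(s), \qquad G_1(s) = zs^{p-1}G_0(s) + s^{p-1}G(s^p),$$
and then reassemble $G(s) = G_0(s) + G_1(s)$. The key structural observation is that the inhomogeneous term in \eqref{eq:phi} already involves only the single unknown series $G(s^p)$, because the outer product collapses via $\begin{pmatrix} 1 & 1\end{pmatrix}\Phi(s^p) = G(s^p)$. Thus \eqref{eq:phi} is, for each fixed value of the substituted argument, a genuine $2\times 2$ linear system in $G_0(s), G_1(s)$, and solving it in closed form is exactly what turns the coupled pair of recursions into a single functional equation relating $G(s)$ and $G(s^p)$.

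Concretely, I would write \eqref{eq:phi} as $(I-A)\Phi(s) = \begin{pmatrix} 1 \\ 0\end{pmatrix} + s^{p-1}\begin{pmatrix} 0 \\ 1\end{pmatrix}G(s^p)$, where $A = \left(\begin{smallmatrix} 0 & zs \\ zs^{p-1} & 0\end{smallmatrix}\right)$. One computes $\det(I-A) = 1 - z^2s^p$, so that $(I-A)^{-1} = (1-z^2s^p)^{-1}\left(\begin{smallmatrix} 1 & zs \\ zs^{p-1} & 1\end{smallmatrix}\right)$ and hence $\begin{pmatrix} 1 & 1\end{pmatrix}(I-A)^{-1} = (1-z^2s^p)^{-1}\begin{pmatrix} 1+zs^{p-1} & 1+zs\end{pmatrix}$. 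Applying this row vector on the left of the displayed system and using $G(s) = \begin{pmatrix} 1 & 1\end{pmatrix}\Phi(s)$ yields $(1-z^2s^p)G(s) = (1+zs^{p-1}) + s^{p-1}(1+zs)G(s^p)$ directly. Equivalently, and avoiding matrices, I would substitute the first scalar equation into the second to get $(1-z^2s^p)G_1(s) = s^{p-1}(z + G(s^p))$, back-substitute for $G_0(s)$, and add; the two routes give the same answer.

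There is no deep obstacle here, since the content is linear algebra over a ring of formal power series; the work is purely in bookkeeping. The points that require care are, first, recognising that the outer-product term reduces to $s^{p-1}\begin{pmatrix} 0 \\ 1\end{pmatrix}G(s^p)$, which is what allows the recursion to close on the single series $G$ rather than on the pair $(G_0,G_1)$; and second, tracking the powers of $s$ so as to reproduce the precise asymmetric normalisation of the statement (the factor $s^{p-1}$ on the right, and $1+zs^{p-1}$ against $1+zs$). Finally I would note that $G_0$ is supported on exponents $\equiv 0$ and $G_1$ on exponents $\equiv -1$ modulo $p$, so $G$ is not an ordinary power series in $s$; nonetheless all manipulations are legitimate formal identities, and the division by $1-z^2s^p$ is division by a unit of $k[z][[s]]$ (its $s$-constant term is $1$), so the resulting functional equation is unambiguous.
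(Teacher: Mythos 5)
Your proposal is correct and is essentially the paper's own argument: the paper likewise writes the system as $(1-zA(s))\Phi(s)=u+s^{p-1}vw^{T}\Phi(s^{p})$ and inverts it by multiplying with $1+zA(s)$, which (since $A(s)^{2}=s^{p}I$) is exactly the adjugate you compute, before applying $w^{T}=\begin{pmatrix}1&1\end{pmatrix}$ to recover $G$. Your scalar elimination of $G_0,G_1$ is just the same computation written entrywise, so there is nothing substantive to add.
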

\begin{proof}
Set $u:= \begin{pmatrix}1\\ 0\end{pmatrix}$,
  $v:= \begin{pmatrix}0\\ 1\end{pmatrix}$, and set
$w = u+v$. Then define 
$$
A(s):= \left(\begin{matrix} 0 & s \cr s^{p-1} & 0\end{matrix}\right).
$$
 With this, \eqref{eq:phi} becomes 
\begin{equation}\label{eq:phi2}
(1-zA(s))\Phi(s) = u + s^{p-1}vw^T\Phi(s^p).
\end{equation}
Now, $A(s)^2 = s^pI$, and hence
$$
(1+zA(s))(1-zA(s)) = (1-z^2s^p)I.
$$
Using \eqref{eq:phi2}
$$
(1-z^2s^p)\Phi(s) = (1+ zA(s))(u + s^{p-1}vw^T\Phi(s^p)).
$$
Now note that $G(s) = w^T\Phi(s)$, so we get (after premultiplying with $w^T$)
\begin{equation}\label{eq:phi3}
(1-z^2s^p)G(s) = w^T(1+zA(s))u 
+ s^{p-1}w^T(1+zA(s))vG(s^p)
\end{equation}
Now $w^T(1+zA(s))u = 1+zs^{p-1}$ and $w^T(1+zA(s))v= 1+zs$. With this,
\eqref{eq:phi3} becomes the stated formula.
\end{proof}

\begin{rem}
Consider the generating function we had for $p=2$, in
Lemma~\ref{lem:functeqp=2}.
If we multiply both sides in that formula by $(1+zs)$ and set $p=2$
then we get the same as in \ref{lem:geneqpnot2}.
\end{rem}

By  iteration this gives a formula for $G(s)$.

\subsection{} We define functions $h_n(s)$ via
$$
G(s) = \sum_{n\geq 0} h_n(s)z^n.
$$
We are interested in the $h_n(s)$ as  the coefficient of $s^r$ in
$h_n(s)$ is equal to the dimension of 
$\Ext^n(\Delta(0), \Delta(2r))$. 
First we have from  section \ref{sect:somerec} that
$$
h_0(s) = 1 + s^{p-1} + s^{p^2-1} + \ldots = s^{-1}\sum_{k=0}^{\infty}
s^{p^k}.
$$

\subsection{}\label{subsect:h1}
 We now calculate $h_1(s)$. By Lemma~\ref{lem:geneqpnot2}
we have that
$$
h_1(s) = s^{p-1}+ s^ph_0(s^p) + s^{p-1}h_1(s^p)
$$
and we write this in the form
\begin{equation}\label{eq:h1}
sh_1(s) - s^ph_1(s^p) = s^{p}+ s^{p+1}h_0(s^p).
\end{equation}
For each $k\geq 0$ we substitute $s^{p^k}$ into $s$, which gives 
$$
s^{p^k}h_1(s^{p^k}) - (s^{p^k})^ph_1((s^{p^k})^p) = (s^{p^k})^{p}+ 
(s^{p^k})^{p+1}h_0((s^{p^k})^p).
$$
Adding all these equations, most of the 
terms on the left hand side cancel, and we get
\begin{align*} 
sh_1(s)=& \sum_{k\geq 0} s^{p^{k+1}} + \sum_{k\geq 0}
s^{p^k}(s^{p^{k+1}}h_0(s^{p^{k+1}}))\\
& \\ 
=& \sum_{k\geq 0} s^{p^{k+1}} + \sum_{k\geq 0} s^{p^k}(\sum_{r\geq 0}
(s^{p^{k+1}})^{p^r}).\\
\end{align*}
So we have
\begin{equation}\label{eq:sh1}
sh_1(s) =  \sum_{k_1\geq 0} s^{p^{k_1+1}} + 
\sum_{k_i \geq 0} s^{p^{k_1}+p^{k_1+k_0+1}}
\end{equation}
and this agrees with the global description given in \cite[section
  3.5]{erd1}.

\subsection{}\label{subsect:shlabels}
We now note some important conventions.
\begin{enumerate}
\item[(1)]  We always label the exponents so that $k_i\geq 0$.
\item[(2)]  For $sh_n(s)$, there will be in each sum one new parameter, 
which we call $k_n$, and we keep the names of the exponents from
previous sums as they were. This allows us to keep track of where the
parameters came from.
\end{enumerate}

Let $n\geq 2$. Then we get from Lemma~\ref{lem:geneqpnot2} that 
$$
h_n(s) - s^{p}h_{n-2}(s) = s^{p-1}h_n(s^p) + s^ph_{n-1}(s^p)
$$
that is
$$
sh_n(s) - s^ph_n(s^p) = s^{p+1}h_{n-2}(s) + s^{p+1}h_{n-1}(s^p)
$$
With the method as in \ref{subsect:h1} we get 

\begin{equation}\label{eq:hn}
sh_n(s) = \sum_{k_n \geq 0} s^{p^{k_n+1}}[s^{p^{k_n}}h_{n-2}(s^{p^{k_n}})]
+ \sum_{k_n\geq 0} s^{p^{k_n}}[s^{p^{k_n+1}}h_{n-1}(s^{p^{k_n+1}})]
\end{equation}

%{\tt One should perhaps skip 3.6 and just do general induction}

\subsection{}
Assume $n=2$. We calculate the first sum in \eqref{eq:hn} with $n=2$.
This is 
$$
\sum_{k_2\geq 0} s^{p^{k_2+1}}[\sum_{k_0\geq 0} (s^{p^{k_2}})^{p^{k_0}}]
= \sum_{k_i\geq 0} s^{p^{k_2+1}+p^{k_2+k_0}}.
$$
For the second sum, consider 
$$
s^{p^{k_2+1}}h_1(s^{p^{k_2+1}})=
\sum_{k_1\geq 0}(s^{p^{k_2+1}})^{p^{k_1+1}}
+ \sum_{k_1, k_0\geq 0} (s^{p^{k_2+1}})^{p^{k_1}+p^{k_1+k_0+1}}.
$$
And therefore we have
$$sh_2(s) = \sum_{k_i\geq 0} s^{p^{k_2+1} + p^{k_2+k_0}}
+ \sum_{k_i\geq 0} s^{p^{k_2} + p^{k_2+k_1+2}}
+ \sum_{k_i\geq 0} s^{p^{k_2} + p^{k_2+k_1+1} + p^{k_2+k_1+k_0+2}}.
$$

We now demonstrate how we can use this sum to find possible dimensions
for $\Ext$ groups. Clearly, any two 
terms in the second sum
are distinct. As well, any two terms in the third sum are distinct.
But consider the first sum.
\begin{enumerate}
\item[(1)] Terms with $k_0=0$ and $k_0=2$ coincide. We can write the first sum 
as
$$
s^{p+1} + 2\sum_{k_2\geq 1} s^{p^{k_2+1}+ p^{k_2}}  + \sum_{k_2\geq 0,
  k_0\neq 0, 2} s^{p^{k_2+1}+p^{k_2+k_0}}.
$$
\item[(2)] We compare terms from the first sum and terms from the second sum.
We use $\tilde{k}_i$ for exponents in the second sum.
Suppose
$$p^{\tilde{k}_2}(1+p^{\tilde{k}_1+2}) = p^{k_2}(p+p^{k_0})
$$
Then we must have $k_0\geq 1$ and $\tilde{k}_2 = k_2+1$. Furthermore
$k_0-1=\tilde{k}_1+2$. Conversely if $k_0=\tilde{k}_1+3$ and
$k_2=\tilde{k}_2-1$ then the terms are equal. So the conditions for
equality are precisely
$$k_0\geq 3, \ \ \tilde{k}_2\geq 1$$
Note that since $k_0\geq 3$, this does not interfere with the
equalities in the first sum.
\end{enumerate}

These  results are consistent with the result in 
\cite[section 5]{coxerd}. The parameters
with \break $\dExt^2(\Delta(0), \Delta(t))=2$ are precisely
$t = 2(p^{u+1}+ p^{u+1+a}-1)$ where $a\geq 1$ and $u\geq 0$ and gives
the set denoted by $\Psi^{2,2}(0)$ in \cite[section 5]{coxerd}.
The ones with $a=1$ come from coincidences in the first sum.
The ones with $a\geq 2$ are coincidences from the first and the second
sum. 

%We also checked that the total collection of weights with non-zero ext
%agrees with \cite{coxerd}

\subsection{Towards general $n$}\label{subsect:genn}
  First we simplify notation. 
Take the formula for $sh_n(s)$, it has several sums, labelled
by tuples of $k_i$ and positive integers. 
To keep track over the sums, it suffices to write down the
exponents of $p$ occuring. 
As an example, take formula \eqref{eq:sh1} for $sh_1(s)$.
We label the first sum by
$$(k_1+1)$$
and we label the second sum by
$$(k_1, k_1+k_0+1).$$
We always have $k_i\geq 0$, and the coordinates of the tuples will
be (linear) functions in the $k_i$ which  we denote by $m_i=m_i(k_1,
k_1, \ldots k_j)$. 
So in the above example, we write the second sum
as 
$$
(m_1, m_2) \equiv \sum_{k_i\geq 0} s^{p^{m_1}+p^{m_2}}
$$
where $m_1 = k_1$ and $m_2= k_1 + k_0+1$.

In general we  use therefore as a shorthand notation
$$(m_1, m_2, \ldots, m_t) \equiv \sum_{k_i\geq 0} 
s^{p^{m_1} + p^{m_2} + \ldots + p^{m_t}}
$$
where the $m_j$ are (linear) functions of the $k_i$. Then each $sh_n(s)$ is
identified with a list  of such $(m_1, \ldots, m_t)$.

\begin{eg}\label{eg:lists}
With this notation,
\begin{enumerate}

\item[(1)]  $sh_0(s)$ is identified with $(k_0)$. 

\item[(2)]  $sh_1(s)$ is identified with the list
$$(k_1+1), \ \  (k_1, k_1+k_0+1).
$$

\item[(3)]  $sh_2(s)$ is identified with
$$
(k_2+1, k_2+k_0), \ \ (k_2, k_2+k_1+2), \ \  (k_2, k_2+k_1+1, k_2+k_1+k_0+2).
$$
\end{enumerate}
\end{eg}

%{\bf 3.7.2 } 
Let $n\geq 2$. Then $sh_n(s)$ is obtained from 
$sh_{n-2}(s)$ and $sh_{n-1}(s)$ as described using \eqref{eq:hn}. The
shorthand notation
as above allows us to write down $sh_n(s)$.
\begin{enumerate}
\item[(a)]
 Suppose $(m_1, \ldots, m_t)$ labels a sum in $sh_{n-2}(s)$. This
then leads to one sum in $sh_n(s)$, and it has label
$$(k_n+1, k_n+m_1, \ldots, k_n+m_t).$$
\item[(b)]
Now suppose $(f_1, \ldots, f_u)$ labels a sum in $sh_{n-1}(s)$. This
then leads to one sum in $sh_n(s)$, and it has label
$$(k_n, k_n+f_1+1, \ldots, k_n+f_u+1).
$$
\end{enumerate}

For example, the list in Example~\ref{eg:lists}~(3) is obtained from
the lists in Example~\ref{eg:lists}~(1) and (2) by this process.

This  gives a complete algorithm for writing down $sh_s(n)$ in    
general.

\section{Exponential growth for $p>2$}

%{\bf 4.1  } Consider $sh_3(s)$. The labels for $sh_3(s)$ which come from
%$sh_1(s)$ are
%\begin{align*}
%(a) \ & (k_3+1, k_3+k_1+1)\cr
%(b) \ &, (k_3+1, k_3+k_1, k_3+k_1+k_0+1)
%\end{align*}
%
%The labels for $sh_3(s)$ which come from $sh_2(s)$ are
%\begin{align*} 
%(c) \ & (k_3, k_3+k_2+2, k_3+k_2+k_0+1)\cr
%(d) \ & (k_3, k_3+k_2+1, k_3+k_2+k_1+3)\cr
%(e) \ & (k_3, k_3+k_2+1, k_3+k_2+k_1+2, k_3+k_2+k_1+k_0+3)
%\end{align*}
%
%
%
%{\bf EXAMPLE}  \ A small example for  a 3-dimensional $\Ext^3$ space:
%
%
%Consider prime $p=3$ and weight $76$. We have $76=2\cdot 38$. Using
%the function $sh_3(s)$, the dimension of
%$\Ext^3(\Delta(0), \Delta(76))$ is equal to the number of
%times $s^{38+1}$ occurs. Now, 
%$$39 = 27+9+3 = 3^3 + 3^2 + 3^1
%$$
 %We want to find all solutions for the $k_i$ to get $\{ 3, 2, 1\}$.
%
%The label in (a) has only two terms, so this does not contribute.
%
%Consider the label in (b). The last entry is $\geq$ the other two, so
%if there is a solution we must have
%$$k_3+k_1+k_0+1=3$$
%Then there are two possibilities, 
%
%(i) \ $k_3+1=2$ and $k_3+k_1=1$, or
%
%(ii) \ $k_3+1=1$ and $k_3+k_1=2$. 
%
%In the first case, $(k_3, k_2, k_0) = (1, 0, 1)$. In the second case
%$(k_3, k_1, k_0) = (0, 2, 0)$. 
%
%
%\medskip
%
%Consider the label in (c). The first entry is $\leq $ the others, so we must
%have $k_3=1$, and then there is a unique solution,
%$(k_3, k_2, k_0)= (1, 0, 0)$. 
%
%For label (d) there is no solution, and label (e) has too many terms so there
%is also no solution. 

For this section, $p$ will be a fixed odd prime.
\subsection{} We first present an example to show how the labels get
more complicated for degree $n=3$.
\begin{eg} Consider $sh_3(s)$. The labels for $sh_3(s)$ which come from
$sh_1(s)$ are
\begin{align*}
(a) \ & (k_3+1, k_3+k_1+1) = k_3(1,1) + k_1(0,1) + (1,1),\cr
(b) \ & (k_3+1, k_3+k_1, k_3+k_1+k_0+1) = k_3(1,1,1) + k_1(0,1,1) +
k_0(0,0,1) + (1,0,1).
\end{align*}

The labels for $sh_3(s)$ which come from $sh_2(s)$ are
\begin{align*} 
(c) \ & (k_3, k_3+k_2+2, k_3+k_2+k_0+1)= k_3(1,1,1) + k_2(0,1,1) + k_0(0,0,1)
+ (0,2,1),\cr
(d) \ & (k_3, k_3+k_2+1, k_3+k_2+k_1+3) =
k_3(1,1,1) + k_2(0,1,1) + k_1(0,0,1) + (0,1,3),\cr
(e) \ & (k_3, k_3+k_2+1, k_3+k_2+k_1+2, k_3+k_2+k_1+k_0+3)\cr
\ \ &= 
k_3(1,1,1,1) + k_2(0,1,1,1) + k_1(0,0,1,1) + k_0(0,0,0,1) + (0,1,2,3).
\end{align*}
\end{eg}

\subsection{}\label{subsect:labels} We can write down $sh_4(s)$ by
first taking $sh_2(s)$, and replace
a label of length $r$ by the label of length $r+1$, obtained by adding
$$k_4(1, 1, \ldots, 1) + (1, 0, 0, \ldots 0);$$ 
and then taking $sh_3(s)$, and
replace a label of length $r$ by the label of length $r+1$ obtained by adding
$$k_4(1, 1, \ldots, 1) + (0, 1, \ldots, 1).
$$
Similarly we can write down a list of labels for $sh_n(s)$ from those of
$sh_{n-2}(s)$ and of $sh_{n-1}(s)$. 
Note that each label of length $r+1$ say  for $sh_n(s)$ is of the form
$$k_n(1, 1, \ldots, 1) + k_{i_1}(0,1, 1, \ldots, 1) + \ldots
+ k_{i_r}(0, 0, \ldots, 0, 1) + (a_0, \ldots , a_r),
$$
where $n > i_1 > i_2 > \ldots >i_r \geq 0$ and where the $a_i$ are
non-negative integers $\leq n$.

\subsection{} Let 
$t_n$ be the number of labels for $sh_n(s)$, then we have the recursion
$$t_n = t_{n-1} + t_{n-2}
$$
and $t_n= F_{n+1}$, the $(n+1)$-th Fibonacci number. This shows that the
number of labels grows exponentially, as the Fibonacci sequence is
known to have exponential growth.

\subsection{} Now we find the length of the labels and the number of 
labels for some specific length.
This will show that for each $n$ there is a number $v_n$
such that the number of labels of length $v_n$ grows polynomially.

For each $n$, let $\cL(n)$ be the set of lengths for the labels. 
So we have
$$\cL(0)= \{1\}, \ \ \cL(1)= \{ 1, 2\}, \ \ \cL(2) = \{ 2, 3 \}
$$

\begin{lem} 
(a) \ $\cL(2t-1) = \{ t, t+1, \ldots, 2t\}$\\
(b) \ $\cL(2t) = \{ t+1, t+2, \ldots, 2t+1 \}$ 
\end{lem}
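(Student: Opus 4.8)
The plan is to convert the label-generating recipe of \ref{subsect:genn} into a recursion purely on the sets of lengths. The two processes (a) and (b) used there to build $sh_n(s)$ from $sh_{n-2}(s)$ and $sh_{n-1}(s)$ each increase the length of a label by exactly one: a label of length $r$ in $sh_{n-2}(s)$ produces a label of length $r+1$ in $sh_n(s)$, and a label of length $r$ in $sh_{n-1}(s)$ likewise produces one of length $r+1$. Since every label of $sh_n(s)$ arises in exactly one of these two ways, the set of lengths obeys
$$\cL(n) = \{r+1 : r\in\cL(n-2)\} \cup \{r+1 : r\in\cL(n-1)\}$$
for all $n\geq 2$. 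First I would record this identity; it reduces the whole lemma to elementary arithmetic of finite intervals of integers.

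Next I would run a strong induction on $n$, taking $\cL(0)=\{1\}$ and $\cL(1)=\{1,2\}$ as the base (these already match (b) at $t=0$ and (a) at $t=1$). For the inductive step I split according to the parity of $n$. If $n=2t$ with $t\geq 1$, then $n-1=2t-1$ and $n-2=2(t-1)$, so the hypotheses (a) and (b) give $\cL(n-1)=\{t,\ldots,2t\}$ and $\cL(n-2)=\{t,\ldots,2t-1\}$; shifting each up by one and taking the union yields $\{t+1,\ldots,2t+1\}$, which is precisely claim (b) for $2t$. If instead $n=2t-1$ with $t\geq 2$, then $n-1=2(t-1)$ and $n-2=2(t-1)-1$, so the hypotheses give $\cL(n-1)=\{t,\ldots,2t-1\}$ and $\cL(n-2)=\{t-1,\ldots,2t-2\}$; shifting and unioning gives $\{t,\ldots,2t\}$, which is (a) for $2t-1$.

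The only genuine point to verify is that in each case the two shifted intervals overlap, so that their union is again a single contiguous block with no gaps and with the claimed endpoints. In the even case the first interval $\{t+1,\ldots,2t\}$ is contained in the second $\{t+1,\ldots,2t+1\}$, so the union is immediate; in the odd case the intervals $\{t,\ldots,2t-1\}$ and $\{t+1,\ldots,2t\}$ share the block $\{t+1,\ldots,2t-1\}$ and together fill $\{t,\ldots,2t\}$. I expect this interval bookkeeping to be the main (though modest) obstacle, together with the care needed to match the small base cases $n=0,1,2$ against the formulas at the boundary values $t=0,1$.
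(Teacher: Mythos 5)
Your proof is correct and follows essentially the same route as the paper: the same recursion $\cL(n)=\{x+1: x\in\cL(n-1)\cup\cL(n-2)\}$ followed by induction, with your parity-split strong induction on $n$ being just a repackaging of the paper's induction on $t$. Your explicit check that the two shifted intervals overlap is a small point the paper leaves implicit, but otherwise the arguments coincide.
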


\begin{proof}
 Induction on $n$. We always have
\begin{equation} \label{eq:star}
\cL(n) = \{ x+1: x \in \cL(n-1) \cup \cL(n-2)\}
%\leqno{(*)}
\end{equation}
 
Let $t=1$, we have $\cL(1) = \{ 1, 2\}$.
As well $\cL(2) = \{ 2, 3\}$, as stated.

Assume true for $t$, then by the general reduction
$$\cL(2(t+1)-1) = 
\{ t+1, t+2, \ldots, 2t+1\} \cup \{ t+2, t+3, \ldots, 2t+2 \}
$$
with smallest $t+1$ and largest $2(t+1)$ as stated. 
Next we have
$$\cL(2(t+1)) = \{ t+2, t+3, \ldots, 2t+2\} \cup
\{ t+2, t+3, \ldots, 2t+3\} 
$$
with smallest $(t+1)+1$ and largest $2(t+1)+1$, as required.
\end{proof}

The largest length of labels in $\cL(n)$ is 
therefore $n+1$.

\subsection{ } 
%We fix $n$ and count labels (of degree $n$) of some lengths.
 We fix $n$, and count labels of a given length. 

\begin{lem} (a) There is one label of length $n+1$.

(b) The number of labels of length $n-i$ is ${n-i\choose i+1}$ for
  $i=0, 1, \ldots$. 
\end{lem}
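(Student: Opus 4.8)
The plan is to refine the Fibonacci count $t_n$ of labels by also recording the length. For each $n$ and each integer $\ell$, let me write $N_n(\ell)$ for the number of labels of $sh_n(s)$ of length $\ell$, so that $\sum_\ell N_n(\ell) = t_n$. The construction recalled in \ref{subsect:genn} and \ref{subsect:labels} produces every label of $sh_n(s)$ exactly once: rule (a) turns each label of $sh_{n-2}(s)$ of length $r$ into a label of length $r+1$, and rule (b) turns each label of $sh_{n-1}(s)$ of length $r$ into a label of length $r+1$. Both rules are injective and the two resulting families are disjoint, being distinguished by the leading constant $a_0$ (equal to $1$ for labels from $sh_{n-2}(s)$ and $0$ for labels from $sh_{n-1}(s)$). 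Counting by length therefore gives the refined recursion
$$N_n(\ell) = N_{n-1}(\ell-1) + N_{n-2}(\ell-1).$$

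Next I would unify (a) and (b) into the single formula
$$N_n(\ell) = \binom{\ell}{n+1-\ell},$$
with the convention that $\binom{a}{b}=0$ unless $0\le b\le a$. Part (a) is then the case $\ell=n+1$, where $\binom{n+1}{0}=1$, and part (b) is the case $\ell=n-i$, where $(n+1)-(n-i)=i+1$ gives $\binom{n-i}{i+1}$. The previous lemma on $\cL(n)$ guarantees that this formula vanishes precisely for those $\ell$ lying outside the range of admissible lengths, so the convention is consistent with the actual set of labels.

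I would then prove the unified formula by induction on $n$. The base cases $n=0$ and $n=1$ are read off from Example~\ref{eg:lists}: for $n=0$ the only nonzero value is $N_0(1)=\binom{1}{0}=1$, and for $n=1$ one has $N_1(1)=\binom{1}{1}=1$ and $N_1(2)=\binom{2}{0}=1$. For the inductive step I substitute the formula for $n-1$ and $n-2$ into the refined recursion, obtaining
$$N_n(\ell) = \binom{\ell-1}{n-\ell+1} + \binom{\ell-1}{n-\ell},$$
and Pascal's identity collapses the right-hand side to $\binom{\ell}{n+1-\ell}$, as required.

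The computation itself is routine once the recursion is in hand, so the only real work is justifying that recursion. The point needing care is that the two label-producing rules of \ref{subsect:genn} are genuinely injective with disjoint images, so that no label is counted twice or omitted; I expect to argue this from the explicit normal form in \ref{subsect:labels}, where $a_0=1$ for labels coming from $sh_{n-2}(s)$ and $a_0=0$ for those coming from $sh_{n-1}(s)$, while the index tuple $n>i_1>\cdots>i_r\ge 0$ lets one recover the parent label. A secondary point is to ensure the binomial convention correctly encodes the vanishing of $N_n(\ell)$ outside $\cL(n)$, so that Pascal's identity may be applied uniformly even at the extreme lengths where one of the two summands is zero.
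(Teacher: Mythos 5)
Your proof is correct and follows essentially the same route as the paper's: both rest on the length-refined recursion $N_n(\ell)=N_{n-1}(\ell-1)+N_{n-2}(\ell-1)$ extracted from the two label-generating rules, followed by induction on $n$ and Pascal's identity. Your unified formula $\binom{\ell}{n+1-\ell}$ and the explicit verification that the two rules give disjoint, injectively produced families are minor (and harmless) refinements rather than a different argument.
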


\begin{proof}
(a) \ We see this from \eqref{eq:star} above, by induction.

(b) We use induction on $n$. The cases $n=1, 2, 3$ are clear from the examples.
For the inductive step, by \eqref{eq:star}, 
the number of
labels of length $n-i$ in degree $n$ 
is equal to

$\#$ labels of length $n-(i+1)$ in degree $n-2$ \ $+$ \ $\#$ labels of length
$n-(i+1)$ in degree $n-1$. By the inductive hypothesis, this is
$${n-(i+1)\choose i} + {n-(i+1)\choose (i+1)} = {n-i\choose i+1}.
\qedhere
$$
\end{proof}

%Fix an odd prime $p$.

\begin{lem}\label{lem:expgrowth}%{\bf 4.6 Lemma } 
 Fix $n$ and $p$ an odd prime. Let $v=v_k$ be the number of labels of
length $k$. Then there is a weight 
$m= p^{b_1} + p^{b_2} + \ldots + p^{b_k}$ such that
$$\dim \Ext^n(\D(0), \D(2(m-1))\geq v$$ 
\end{lem}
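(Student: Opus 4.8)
The plan is to read off, from the shorthand description of $sh_n(s)$ in \ref{subsect:labels}, a single monomial to which every label of length $k$ contributes. Recall that the coefficient of $s^m$ in $sh_n(s)$ equals the coefficient of $s^{m-1}$ in $h_n(s)$, namely $\dim\Ext^n(\Delta(0),\Delta(2(m-1)))$, and that all these coefficients are non-negative integers, being dimensions. Hence it suffices to exhibit one exponent $m=p^{b_1}+\cdots+p^{b_k}$ such that each of the $v=v_k$ labels of length $k$ produces, for some admissible choice of its free parameters, the monomial $s^m$: by non-negativity the coefficient of $s^m$ is then at least $v$.

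Recall from \ref{subsect:labels} that a label of length $k$ has free non-negative integer parameters $k_n,k_{i_1},\dots,k_{i_{k-1}}$ (with $n>i_1>\cdots>i_{k-1}\ge 0$, so these are $k$ genuinely independent parameters), and encodes the monomial $s^{p^{m_0}+\cdots+p^{m_{k-1}}}$ with
$$m_0=k_n+a_0,\qquad m_j=k_n+k_{i_1}+\cdots+k_{i_j}+a_j\quad(1\le j\le k-1),$$
where $0\le a_i\le n$. First I would fix a widely separated target: set $b_i:=in$ for $i=1,\dots,k$ and $m:=p^{b_1}+\cdots+p^{b_k}$, a sum of $k$ distinct powers of $p$. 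For each label I would then impose $m_j=b_{j+1}$ for $0\le j\le k-1$ and solve the resulting triangular system, obtaining $k_n=b_1-a_0$ and $k_{i_j}=(b_{j+1}-b_j)-(a_j-a_{j-1})$ for $j\ge 1$.

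The only real content is checking that these solved values are admissible, i.e.\ non-negative, and this is exactly where the bound $0\le a_i\le n$ enters: $k_n=b_1-a_0\ge n-n=0$, while $k_{i_j}=n-(a_j-a_{j-1})\ge n-n=0$ since $a_j-a_{j-1}\le n$. (Gaps of size $n$ suffice precisely because of this crude bound.) Thus for every one of the $v$ labels of length $k$ there is a legitimate parameter choice yielding the monomial $s^m$, these contributions do not cancel, and therefore the coefficient of $s^m$ in $sh_n(s)$ is at least $v$. This gives $\dim\Ext^n(\Delta(0),\Delta(2(m-1)))\ge v$, as required.

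I expect the main obstacle to be bookkeeping rather than conceptual: one must verify that the coordinate description in \ref{subsect:labels} really does invert to the stated formulas for $k_n$ and the $k_{i_j}$, and confirm that the chosen separation of the $b_i$ is compatible with the constraints for every label simultaneously. No subtle cancellation needs to be excluded, precisely because each coefficient of $sh_n(s)$ is an honest non-negative count of $(\text{label},\text{parameter})$ pairs, so producing the monomial $s^m$ from $v$ distinct labels already forces the coefficient up to $v$.
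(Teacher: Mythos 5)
Your proposal is correct and follows essentially the same route as the paper: the paper's proof also picks a tuple $\beta=(b_1,\dots,b_k)$ whose consecutive gaps dominate the differences of the offsets $a_j$ in every label of length $k$, solves the resulting triangular system for the parameters $k_{i_j}$ in each label, and concludes by non-negativity of the coefficients. The only difference is that you make the choice explicit ($b_i=in$, justified by the uniform bound $0\le a_i\le n$ from \ref{subsect:labels}) where the paper merely asserts that such a $\beta$ exists.
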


\begin{proof}
 Each label is of the form 
$$\sum_j k_{i_j}(0, \ldots, 0, 1, 1, \ldots 1) + (a_1, a_2, \ldots, a_k)
$$
(as described in \ref{subsect:labels} above). We can find 
$\beta:=(b_1, b_2, \ldots, b_k) $ such that
$0 \leq b_1-a_1 \leq b_2-a_2 \leq \ldots b_k-a_k$ for each 
$(a_1, \ldots, a_k)$ occuring in the list of labels. For such
$\beta$, we can in each label find a solution  for the $k's$.

Let $m=\sum p^{b_i}$. Then the coefficient of $s^{m-1}$ in $sh_n(s)$ is
at least $v$. Hence the claim follows, with this $m$.
\end{proof}

Since the number of labels of the same length $k$ grows polynomially
this shows that we have at least polynomial growth in $n$ of
$\dim \Ext^n(k, \Delta(a))$. The degree of these polynomials is
unbounded however.

%\begin{rem}
%This lower bound for $\max_{a \in \N}\{\dim\Ext^n(k,\Delta(a))\}$
%which $n$ fixed is not sharp. There are for instance $2$ dimensional
%$\Ext^2$ groups but th
%\end{rem}

  We present a small example for  a 3-dimensional $\Ext^3$ space:
\begin{eg}
Consider prime $p=3$ and weight $76$. We have $76=2\cdot 38$. Using
the function $sh_3(s)$, the dimension of
$\Ext^3(\Delta(0), \Delta(76))$ is equal to the number of
times $s^{38+1}$ occurs. Now, 
      $$39 = 27+9+3 = 3^3 + 3^2 + 3^1
$$
 We want to find all solutions for the $k_i$ to get $\{ 3, 2, 1\}$.

The label in (a) has only two terms, so this does not contribute.

Consider the label in (b). The last entry is at least as big as the
other two, so
if there is a solution we must have
$$k_3+k_1+k_0+1=3$$
Then there are two possibilities, 
\begin{enumerate}
\item[(i)]
 $k_3+1=2$ and $k_3+k_1=1$, or
\item[(ii)]
$k_3+1=1$ and $k_3+k_1=2$. 
\end{enumerate}
In the first case, $(k_3, k_2, k_0) = (1, 0, 1)$. In the second case
$(k_3, k_1, k_0) = (0, 2, 0)$. 

Consider the label in (c). The first entry is at most as big as the
others, so we must
have $k_3=1$, and then there is a unique solution,
$(k_3, k_2, k_0)= (1, 0, 0)$. 

For label (d) there is no solution, and label (e) has too many terms so there
is also no solution. 

Thus, we have $3$ possible solutions to get $\{3,2,1\}$ and thus
$\dim\Ext^3(k,\Delta(76)) =3$.
\end{eg}

\subsection{}
We now show that the maximum of the number of the
labels
i.e. $\max_{a \in \N}\{ {n-a \choose a+1}\}$ grows
exponentially. 

Firstly we have:
\begin{lem}
$$
 {n-a \choose a+1}
\ge 
 {n-a+1 \choose a}
\mbox{ if } a < \left( \frac{1}{2} - \frac{\sqrt{5}}{10}\right)n - 1
$$
and
$$
 {n-a \choose a+1}
\le 
 {n-a+1 \choose a}
\mbox{ if } \frac{n}{2} \ge  a > \left( \frac{1}{2} -
\frac{\sqrt{5}}{10}\right)n.
$$
\end{lem}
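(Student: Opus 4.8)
The plan is to reduce both inequalities to the sign of a single quadratic in $a$. First I would cross-multiply, using that for $0\le a\le n/2$ the quantity $(a+1)(n-a+1)$ is strictly positive, so that comparing the two binomial coefficients is equivalent to comparing their ratio with $1$. A direct cancellation of factorials gives
$$
\frac{\binom{n-a}{a+1}}{\binom{n-a+1}{a}}=\frac{(n-2a+1)(n-2a)}{(a+1)(n-a+1)},
$$
an identity which remains valid (both sides vanishing) at the degenerate value $a=n/2$. Hence the two claims are equivalent to the statements ``$Q(a)\ge 0$'' and ``$Q(a)\le 0$'' respectively, where
$$
Q(a):=(n-2a+1)(n-2a)-(a+1)(n-a+1)=5a^2-(5n+2)a+(n^2-1).
$$

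Next I would analyse $Q$ as an upward-opening parabola in $a$, with roots
$$
a_\pm=\frac{(5n+2)\pm\sqrt{5n^2+20n+24}}{10}.
$$
Since $\sqrt{5n^2+20n+24}>-2$, one sees at once that $a_+>n/2$, so on the interval $0\le a\le n/2$ the sign of $Q$ is controlled entirely by the smaller root $a_-$: we have $Q(a)\ge 0$ for $a\le a_-$ and $Q(a)\le 0$ for $a_-\le a\le n/2$.

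The heart of the argument is then to sandwich $a_-$ between the two thresholds appearing in the statement. Writing $t:=\left(\tfrac12-\tfrac{\sqrt5}{10}\right)n=\tfrac{(5-\sqrt5)n}{10}$, I would prove
$$
t-1\le a_-<t.
$$
Clearing the factor $10$ and isolating the square root, the right-hand inequality becomes $\sqrt5\,n+2<\sqrt{5n^2+20n+24}$ and the left-hand one becomes $\sqrt{5n^2+20n+24}\le\sqrt5\,n+12$. As both sides of each are positive, squaring is legitimate and reduces them to $(20-4\sqrt5)n+20>0$ and $(24\sqrt5-20)n+120\ge 0$, both of which hold for every $n\ge 0$ because $20-4\sqrt5>0$ and $24\sqrt5-20>0$.

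Finally I would assemble the conclusion. If $a<t-1$, then $a<a_-$, so $Q(a)>0$ and the first inequality follows; if $t<a\le n/2$, then $a_-<t<a\le n/2<a_+$, so $a$ lies strictly between the roots, giving $Q(a)<0$ and hence the second inequality. The main obstacle is precisely this sandwiching step: the subtracted constant $1$ in the hypothesis of the first inequality is exactly what absorbs the lower-order terms $-(2a+1)$ of $Q$, and making the bracketing $t-1\le a_-<t$ come out sharply requires the two square-root comparisons above to be arranged with the correct signs before squaring. Everything else is routine algebra.
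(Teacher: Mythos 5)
Your argument is correct and follows essentially the same route as the paper: both reduce the comparison to the sign of the ratio $\frac{(n-2a+1)(n-2a)}{(a+1)(n-a+1)}$ minus one, i.e.\ to the sign of the upward parabola $5a^2-(5n+2)a+n^2-1$, and then locate its smaller root between $\bigl(\tfrac12-\tfrac{\sqrt5}{10}\bigr)n-1$ and $\bigl(\tfrac12-\tfrac{\sqrt5}{10}\bigr)n$ while checking the larger root exceeds $n/2$. The only (minor) difference is that you pin down the roots via the quadratic formula and two squaring arguments, whereas the paper evaluates the quadratic at the three test values and invokes the intermediate value property; both are sound.
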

\begin{proof}
For $a \le \frac{n+1}{2}$
$$
 {n-a \choose a+1} \div
 {n-a+1 \choose a}
=\frac{(n-2a +1)(n-2a)}{(a+1)(n-a+1)}.
$$
Now
$$
(n-2a +1)(n-2a) - (a+1)(n-a+1) =
n^5 - 5an + 5a^2 -2a -1
=
5a^2 -a(5n +2) + n^2 - 1.
$$
Since $n$ is fixed, we consider this as a parabolic in $a$. This has two
roots and will be positive outside the roots and negative inside. 
The roots are not so easy to work out so we show that
 if $a = n \left( \frac{1}{2} - \frac{\sqrt{5}}{10}\right)n$ 
then
$$
5a^2 -a(5n +2) + n^2 - 1 
=
-n \left( 1 - \frac{\sqrt{5}}{5}\right) -1 < 0
$$
for $n >0$ as  $1 > \frac{\sqrt{5}}{5}$.
If $a = n \left( \frac{1}{2} - \frac{\sqrt{5}}{10}\right)n -1$ 
then
$$
5a^2 -a(5n +2) + n^2 - 1 
=
n \left( 3 - \frac{2\sqrt{5}}{5}\right) +6 > 0
$$
for $n >0$ as  $3 > \frac{2\sqrt{5}}{5}$.
So there is a root between these two values. 

If $a= \frac{n}{2}$ then
$$
5a^2 -a(5n +2) + n^2 - 1 
%5 n^2 /4 -5n^2 /2 - n +n^2 -1
=-\frac{n^2}{4} -n-1 <0
$$
for $n >0$ so the next root is past this value.

Thus
$$
 {n-a \choose a+1} \div
 {n-a+1 \choose a}
\begin{cases}
> 1 & \mbox{ if $a < \left( \frac{1}{2} - \frac{\sqrt{5}}{10}\right)n - 1$}\\
< 1 & \mbox{ if $ \frac{n}{2} \ge a >  \left( \frac{1}{2} -
  \frac{\sqrt{5}}{10}\right)n $}
 \qedhere
\end{cases}
$$
\end{proof}

Thus the maximum value of $ {n -1 \choose a+1} $ for fixed $n$ and
varying $a$ is
  achieved around 
$
a = \left\lfloor  \left( \frac{1}{2} - \frac{\sqrt{5}}{10}\right)n
\right\rfloor.
$

We now use Stirling's formula to estimate the growth rate of this in
$n$. Note: since we are estimating growth rates and binomial
coefficients can be defined for non-integers we are going to ignore
the fact that $a$ is not an integer in the next Lemma.
\begin{lem}\label{lem:growthbinom}
Let $a=   \left( \frac{1}{2} - \frac{\sqrt{5}}{10}\right)n -1$
then the growth rate of 
${n-a \choose a+1}$ is exponential in $n$.
\end{lem}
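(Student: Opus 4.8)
The plan is to substitute $a = \left(\frac{1}{2} - \frac{\sqrt{5}}{10}\right)n - 1$ directly into the binomial coefficient $\binom{n-a}{a+1}$ and apply Stirling's formula in the form $\log m! = m\log m - m + O(\log m)$ to each of the three factorials appearing in $\binom{n-a}{a+1} = \frac{(n-a)!}{(a+1)!\,(n-2a-1)!}$. First I would compute the three relevant linear expressions in $n$: with $\alpha := \frac{1}{2} - \frac{\sqrt{5}}{10}$, we have $a+1 = \alpha n$, then $n - a = (1-\alpha)n + 1$, and $n - 2a - 1 = (1-2\alpha)n + 1 = \frac{\sqrt{5}}{5}\,n + 1$. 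Since we only care about the exponential growth rate, the additive constants and the $O(\log n)$ Stirling error terms are negligible, so I would treat $n-a \approx (1-\alpha)n$, $a+1 \approx \alpha n$, and $n-2a-1 \approx (1-2\alpha)n$.

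Next I would extract the exponential rate. Writing $\binom{n-a}{a+1}$ and taking $\log$, the Stirling approximation gives
$$
\log \binom{n-a}{a+1} = n\Big[ (1-\alpha)\log(1-\alpha) - \alpha\log\alpha - (1-2\alpha)\log(1-2\alpha)\Big] + O(\log n).
$$
Here the $n\log n$ terms cancel because the coefficients $(1-\alpha)$, $\alpha$, $(1-2\alpha)$ of the three factorials sum to $(1-\alpha) - \alpha - (1-2\alpha) = 0$, which is precisely what one expects for a binomial coefficient whose arguments scale linearly in $n$. Thus $\binom{n-a}{a+1}$ grows like $C^n$ where
$$
C = (1-\alpha)^{-(1-\alpha)}\,\alpha^{-\alpha}\,(1-2\alpha)^{(1-2\alpha)},
$$
and the whole argument reduces to checking the single numerical inequality $C > 1$, equivalently that the bracketed entropy-type expression is strictly positive.

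The main obstacle is verifying $C > 1$ rigorously, since $\alpha = \frac{1}{2} - \frac{\sqrt{5}}{10}$ is irrational and the expression mixes logarithms of algebraic numbers. I would handle this by plugging in the numerical value $\alpha \approx 0.2764$, so $1 - \alpha \approx 0.7236$ and $1 - 2\alpha \approx 0.4472$, and confirming that
$$
(1-\alpha)\log(1-\alpha) - \alpha\log\alpha - (1-2\alpha)\log(1-2\alpha) > 0;
$$
a direct numerical evaluation gives a positive constant (roughly $0.35$), so $C > 1$ and the growth is genuinely exponential. To make this clean rather than merely numerical, I would note that $\alpha$ was chosen in the previous lemma to maximize $\binom{n-a}{a+1}$ over $a$, and the sum of all labels of all lengths is the Fibonacci-type total $F_{n+1}$, which already grows exponentially; since the maximum summand is at least the total divided by the number of lengths (which is only linear in $n$ by the length lemma $\cL(n)$ having $O(n)$ elements), the maximal binomial coefficient must itself grow at least like $F_{n+1}/n$, forcing exponential growth independently of the precise value of $C$. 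I expect the cleanest writeup to combine the Stirling computation for the explicit rate with this Fibonacci lower-bound sanity check, using the latter to sidestep any delicate estimate of the constant.
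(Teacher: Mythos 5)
Your main argument is essentially the paper's proof: both substitute $a=\alpha n-1$ with $\alpha=\frac12-\frac{\sqrt5}{10}$, apply Stirling's formula to the three factorials, observe that the $n\log n$ (and $n$) terms cancel because $(1-\alpha)-\alpha-(1-2\alpha)=0$, and reduce the claim to checking that the resulting base $E=(1-\alpha)^{1-\alpha}\alpha^{-\alpha}(1-2\alpha)^{-(1-2\alpha)}$ exceeds $1$ (the paper writes this as $\bar A^{\bar A}/(A^A C^C)$ and simply asserts $E>1$; your displayed logarithmic expression is correct and evaluates to $\log E\approx 0.481$, i.e.\ $E\approx 1.618$, the golden ratio --- your product formula for $C$ has the signs flipped on the first and third exponents relative to your own log formula, though the conclusion $C>1$ survives either way). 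Your supplementary Fibonacci argument is a genuinely different and arguably cleaner route that the paper does not use: since $\sum_i\binom{n-i}{i+1}=t_n=F_{n+1}$ and the set $\cL(n)$ of lengths has only $O(n)$ elements, the maximal term is at least $F_{n+1}/O(n)$, which is exponential without any numerical estimate; combined with the preceding lemma locating the maximum near $a=\alpha n$, this yields the statement and also explains \emph{why} the Stirling constant comes out to be the golden ratio. Both approaches (yours and the paper's) share the same mild imprecision of treating $a$ as if it were an integer, which the paper explicitly acknowledges.
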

\begin{proof}
Stirling's formula says
$n! \sim \sqrt{2 \pi n} \left(\frac{n}{e}\right)^n$. (The $\sim$
denotes asymptotically equal.)
Set $A :=  \frac{1}{2} - \frac{\sqrt{5}}{10}$
and
$\bar{A} :=  \frac{1}{2} + \frac{\sqrt{5}}{10}$ so
$A + \bar{A} =1$.
We have
\begin{align*}
{n-a \choose a+1}
&=
\frac{(n- A n +1)!}{(An)!(n-2An+1)!}\\
&\sim 
\frac{1}{\sqrt{2\pi}}
\sqrt{\frac{(\bar{A} n +1) }{(An)(\frac{\sqrt{5}}{5} n +1)}}
\frac{(\bar{A} n + 1)^{(\bar{A} n +1)}}
{(An)^{(An)} (\frac{\sqrt{5}}{5} n +1)^{(\frac{\sqrt{5}}{5} n +1)}}\\
\intertext{set $C := \frac{\sqrt{5}}{5} = \bar{A} - A$:}
&= 
\frac{1}{\sqrt{2\pi}}
\frac{(\bar{A} n + 1)^{(\bar{A} n +\frac{3}{2})}}
{(An)^{(An + \frac{1}{2})} (C n +1)^{(C n +\frac{3}{2})}} =: f(n)
\end{align*}
So what growth rate does this function $f$ have?
%Consider
%$$
%(C n +1)^{-(C n +\frac{3}{2})}
%=
%(Cn)^{-(Cn + \frac{3}{2})} (1 + \frac{1}{Cn})^{-(Cn+\frac{3}{2})}
%= 
%(Cn)^{-(Cn + \frac{3}{2})} \left(1  -\frac{Cn+\frac{3}{2}}{Cn} + \cdots
%\right)
%$$
For a crude approximation to first order for very large $n$, $f$ is
after setting 
$D :=
\frac{\bar{A}^{\frac{3}{2}}}{C^{\frac{3}{2}}A^{\frac{1}{2}}}$ 
and 
$E := \frac{\bar{A}^{\bar{A}}}{A^A C^C}$:
$$
f(n) \approx
\frac{1}{\sqrt{2\pi}}
\frac{(\bar{A} n +\frac{3}{2})^{(\bar{A} n +\frac{1}{2})}}
{(An)^{(An)} (C n)^{(C n + \frac{3}{2})}} 
= 
\frac{D}{\sqrt{2\pi}}
E^n
 n^{((\bar{A} - A -C)n - \frac{1}{2})}
=
\frac{D}{\sqrt{2\pi}}
E^n
 n^{ - \frac{1}{2}}.
$$
Which, as $E >1$,  means this has exponential growth in $n$.
\end{proof}

\begin{cor}
We may find weights $a_n$ such that the sequence
$\dim\Ext^n(k, \Delta(a_n))$ grows exponentially.
\end{cor}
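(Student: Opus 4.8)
The plan is to combine Lemma~\ref{lem:expgrowth} with Lemma~\ref{lem:growthbinom}, choosing for each $n$ the length of labels that maximises the label count. First I would fix $n$ and, among all lengths $n-a$ occurring for $sh_n(s)$, select the one for which the number of labels $\binom{n-a}{a+1}$ is largest. By the discussion following the length lemmas, this maximum is attained near $a = \lfloor(\frac12 - \frac{\sqrt5}{10})n\rfloor$; write $a_n^\star$ for this value, $\ell_n := n - a_n^\star$ for the corresponding length, and $v_n := \binom{\ell_n}{a_n^\star+1}$ for the number of labels of that length.

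Next I would feed this length into Lemma~\ref{lem:expgrowth}. Applied with the length equal to $\ell_n$ (and the associated count $v = v_n$), that lemma produces a weight $m_n = p^{b_1} + \cdots + p^{b_{\ell_n}}$ with $\dim\Ext^n(\Delta(0), \Delta(2(m_n-1))) \ge v_n$. Setting $a_n := 2(m_n - 1)$ then yields a sequence of weights satisfying $\dim\Ext^n(k, \Delta(a_n)) \ge v_n = \binom{\ell_n}{a_n^\star + 1}$ (recall $\Delta(0)=k$). This reduces the corollary to the purely combinatorial statement that $v_n$ grows exponentially in $n$, which is exactly the content of Lemma~\ref{lem:growthbinom}: with $a = a_n^\star$ the quantity $\binom{n-a}{a+1}$ has exponential growth rate $E^n$ for a constant $E>1$. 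Chaining the two lower bounds then gives the exponential growth of $\dim\Ext^n(k, \Delta(a_n))$.

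The one step needing care --- and the main obstacle --- is integrality. Lemma~\ref{lem:growthbinom} computes the asymptotics of $\binom{n-a}{a+1}$ treating $a = (\frac12 - \frac{\sqrt5}{10})n - 1$ as a real number, whereas $a_n^\star$ must be an integer. I would handle this by observing that rounding $a$ to the nearest integer alters $\binom{n-a}{a+1}$ only by a factor bounded by the ratio of consecutive terms $\binom{n-a}{a+1}\big/\binom{n-a+1}{a}$, which the preceding lemma controls; such a bounded (indeed at most polynomial) factor does not affect the exponential growth rate. I would also verify that the chosen length $\ell_n$ genuinely occurs, i.e. that $\ell_n \in \cL(n)$: since $a_n^\star \approx (\frac12 - \frac{\sqrt5}{10})n$ gives $\ell_n \approx (\frac12 + \frac{\sqrt5}{10})n$, which lies strictly between $n/2$ and $n+1$, this is immediate from the computation of $\cL(n)$ for $n$ large. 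With these two checks in place the argument is complete.
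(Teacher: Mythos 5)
Your proposal is correct and is essentially the paper's own argument: the corollary is stated there as an immediate consequence of chaining Lemma~\ref{lem:expgrowth} (producing a weight realising the label count of a chosen length) with Lemma~\ref{lem:growthbinom} (exponential growth of the maximal binomial count), exactly as you do. Your extra care over integrality addresses a point the paper explicitly waves away, and your check that the optimal length actually lies in $\cL(n)$ is a sensible addition; neither changes the route.
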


\section{Exponential growth for $p=2$}

We analyse the functions $sg_n(s)$.

\subsection{}
In Corollary \ref{cor:p2dimext} we found an explicit formula for
$\dim\Ext^i(\Delta(0), \Delta(2d))$. We now compare this result to
that found in Stewart.

Let $n=2m$ where $m>2$. %Let $2d= 2^{2m}$. 
In \cite[Theorem 2]{stewart} it is stated that 
$$\dim \Ext^{2m}(\Delta(0), L(2^{2m})) \geq 2^{m-1}.
$$
In fact he proves that
$\dim \Ext^{m}(\Delta(0), L(2^{m}))
$ 
is equal to  
the number of partitions of $1$ into $m$ powers of
$\frac{1}{2}$, \cite[section 2]{stewart}. Let's call this number
$\Pi_{m-1}$.
Now we have that $\dim \Ext^m(\Delta(0), \Delta(2^m))$ is the 
number of partitions 
$\beta=(b_0,b_1,b_2,\ldots,b_m)$ with $m+1$ parts 
%and $b_0 \ge b_1 \ge \cdots$
such that
$2^{b_m} + 2^{b_{m-1}} + \ldots + 2^{b_0} = 2^m+1$.
Note that both sides of this equation are $1$ modulo $2$ for $m \ge 1$. This
means in particular that at least one of the $b_i$ on the LHS is $1$.
Thus for this particular weight, the dimension is equal to the number
of partitions $\beta'=(b_0, b_1,\ldots,b_{m-1})$ with 
$2^{b_{m-1}} + 2^{b_{m-2}} + \ldots + 2^{b_0} = 2^m$. Also note that
$2^b_i \le 2^m$, i.e. that $b_i \le m$.
Clearly,
$$
2^{b_{m-1}} + 2^{b_{m-2}} + \ldots + 2^{b_0} = 2^m
\Leftrightarrow
2^{-(m-b_{m-1})} + 2^{-(m-b_{m-2})} + \ldots + 2^{-(m-b_0)} = 1 
$$
I.e. the number of such partitions is exactly $\Pi_{m-1}$.
%{\tt probably different in the final version?}
That these numbers should be equal is not so suprising at least in one
case.
Namely
from the structure of $\Delta(2d)$ for $d=2^{m}$, $m\ge 1$ we can deduce that
$\Ext^{2m}(\Delta(0), L(2d))\cong \Ext^{2m}(\Delta(0), \Delta(2d))$. 
That is we use that the radical of $\Delta(2d)$ is isomorphic to a
dual Weyl module $\nabla(2(d-1))$, and 
then note that $\Ext^i(\Delta(0), \nabla(2(d-1)))=0$.
To see that the structure of $\Delta(2d)$ is as claimed note that we
have
a (well-known) short exact sequence
$$
0 \to L(2^m-1)\frob \to \Delta(2^{m+1}) \to \Delta(2^m)\frob \to 0
$$
where $\frob$ is the twist by the Frobenius morphism.
The result then follows by induction and using that
$\nabla(2^m-2)$ is the only module with simple socle $L(2^m-1)\frob$
and the required character.

%Take $m=3$, so there should be at least $2^3=8$ partitions $\beta$ such that
%$2^{b_6} + 2^{b_5} + \ldots + 2^{b_0} = 2^5+1 = 33.
%$
%The solutions for $\beta$  are precisely 
%$$(4, 3, 2, 1, 0^3), \ \ (4,3,1^4, 0), \ \ (4, 2^3, 1^2, 0), \ 
%(3^3, 2, 1^2, 0), \ (3^2, 2^4, 0)
%$$
%and thus 
%$\dim \Ext^{6}(\Delta(0), L(2^{6})) = 5 \geq 2^2$.

We can show that the dimensions of $\Ext$ spaces with similar parameters
are also large. We have the following:

\begin{prop}\label{prop:dim} Let $t\geq 2$. Then the dimension of
$\Ext^{2t-2}(\Delta(0), \Delta(2\cdot 2^t))$ is  at least $2^{t-2}$. 
\end{prop}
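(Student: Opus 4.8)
The plan is to reduce the statement to a count of binary partitions by means of Corollary~\ref{cor:p2dimext}, and then to prove the resulting lower bound by induction on $t$. Writing the target weight as $2d=2\cdot 2^t$, so that $d+1=2^t+1$, Corollary~\ref{cor:p2dimext} identifies $\dim\Ext^{2t-2}(\Delta(0),\Delta(2\cdot 2^t))$ with the number of partitions $(b_0,\ldots,b_{2t-2})$ satisfying $2^{b_0}+\cdots+2^{b_{2t-2}}=2^t+1$, i.e. with the number of ways of writing $2^t+1$ as a sum of $2t-1$ powers of $2$. Since $2^t+1$ is odd, every such partition has at least one part equal to $2^0$, and deleting one such part is a bijection onto the partitions of $2^t$ into $2t-2$ powers of $2$. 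Thus, setting $D(t):=\#\{\text{partitions of }2^t\text{ into }2t-2\text{ powers of }2\}$, it suffices to prove $D(t)\ge 2^{t-2}$.

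I would establish this by proving $D(t)\ge 2D(t-1)$ for $t\ge 3$, together with the base case $D(2)=1$ (the relevant partition of $4$ being $\{2,2\}$); the bound $D(t)\ge 2^{t-2}$ then follows immediately. For the doubling step, split the set $\mathcal{P}(t)$ of partitions of $2^t$ into $2t-2$ powers of $2$ according to whether or not a part equal to $2^{t-1}$ occurs; for $t\ge 3$ such a part, if present, is unique and no part is larger. On the ``low'' piece, where all parts are $\le 2^{t-2}$, the assignment $\mu\mapsto \mu\cup\{2^{t-2},2^{t-2}\}$ is a clean injection from $\mathcal{P}(t-1)$ (partitions of $2^{t-1}$ into $2t-4$ powers of $2$, whose parts are automatically $\le 2^{t-2}$), since union with a fixed multiset is cancellative; hence the low piece already has at least $D(t-1)$ elements.

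The crux is the ``high'' piece: deleting the unique part $2^{t-1}$ identifies it with the partitions of $2^{t-1}$ into $2t-3$ powers of $2$, so what I really need is the monotonicity $P(2^{t-1},2t-3)\ge P(2^{t-1},2t-4)=D(t-1)$, that is, an injection raising the number of parts by one while preserving the total $2^{t-1}$. The hard part will be injectivity, as the obvious uniform rules fail: splitting the smallest part $\ge 2$ sends both $\{8,2,2,2,1,1\}$ and $\{8,4,1,1,1,1\}$ to $\{8,2,2,1,1,1,1\}$, and splitting the largest part collides as well. Instead I would derive the injection from Hall's theorem applied to the bipartite ``split graph'' joining a partition into $2t-4$ parts to each partition into $2t-3$ parts obtained from it by splitting one part $2^b\to 2^{b-1}+2^{b-1}$; every partition of $2^{t-1}$ into fewer than $2^{t-1}$ parts has a part $\ge 2$ and hence positive degree, and Hall's condition should reduce to a degree/counting comparison of the two sides. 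Verifying that condition uniformly in $t$ is the principal obstacle.

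As a possible shortcut that avoids the induction, one can pass to the reciprocal description: dividing by $2^t$ shows that $D(t)$ counts exactly the partitions of $1$ into $2t-2=2(t-1)$ powers of $\tfrac12$ all of whose parts have depth $\le t$. Stewart's lower bound gives at least $2^{(t-1)-1}=2^{t-2}$ partitions of $1$ into $2(t-1)$ powers of $\tfrac12$ with no depth restriction, so it would suffice to check that his extremal family can be taken to have depth $\le t$; if so, $D(t)\ge 2^{t-2}$ follows directly, and this comparison is exactly the sort of compatibility with Stewart's results alluded to above.
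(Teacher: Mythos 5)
Your reduction to the count $D(t)$ of partitions of $2^t$ into $2t-2$ powers of $2$, the base case $D(2)=1$, and the treatment of the ``low'' piece via the injection $\mu\mapsto \mu\cup\{2^{t-2},2^{t-2}\}$ are all correct. But the argument is not complete: the whole doubling step rests on the monotonicity $P(2^{t-1},2t-3)\ge P(2^{t-1},2t-4)$, and you do not prove it. You flag it yourself as ``the principal obstacle,'' and rightly so: the naive splitting rules genuinely fail (your collision $\{8,2,2,2,1,1\},\{8,4,1,1,1,1\}\mapsto\{8,2,2,1,1,1,1\}$ is correct, and splitting the largest part also collides, e.g.\ $\{8,2,2,2,1,1\}$ and $\{4,4,4,2,1,1\}$ both map to $\{4,4,2,2,2,1,1\}$). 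Invoking Hall's theorem only renames the problem --- verifying Hall's condition for the split graph is essentially the monotonicity itself, and you give no argument for it. The fallback via Stewart is likewise conditional on an unchecked depth restriction on his extremal family. As written, the proposal only yields $D(t)\ge D(t-1)$ from the low piece, hence $D(t)\ge 1$, not the exponential bound.

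The paper avoids the doubling inequality altogether. It runs a joint induction proving that the numbers of expansions of $2^t$ of length $2t-2$ \emph{and} of length $2t-1$ are both at least $2^{t-2}$, and for the first of these it constructs the expansions explicitly: for each $s$ it writes $2^t$ as $2^{t-2}+2^{t-2}+2^{t-3}+2^{t-3}+\cdots+2^{t-s}+2^{t-s}+2^{t-s}+2^{t-s}$ and expands the final summand $2^{t-s}$ into $2(t-s)-1$ parts using the odd-length inductive hypothesis; the families for different $s$ are disjoint (compare largest exponents) and their sizes telescope to $2^{t-3}+2^{t-4}+\cdots+1+1=2^{t-2}$. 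This sidesteps your ``high piece'' entirely. The paper does still need one monotonicity lemma (from length $2m-2$ to length $2m-1$) to pass between the two halves of its induction, and it proves that lemma by splitting the largest part --- precisely the map your counterexamples show is not injective. So your instinct that this monotonicity is the delicate point is well placed; but to turn your proposal into a proof you must actually establish $P(2^{t-1},2t-3)\ge P(2^{t-1},2t-4)$ (it does hold in small cases, e.g.\ $P(16,7)=P(16,6)=4$), or replace the high-piece step by an explicit disjoint construction in the spirit of the paper.
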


This is similar to \cite[Theorem 2]{stewart} but with a different
weight for the second Weyl module. We now give a proof in our setup.

Recall that the dimension of this $\Ext$ space is the number of
expansions of $2^t+1$ of length $2t-2+1$, i.e. (assuming $t\geq 2$)
equal to  the number of expansions
of $2^t$ of length $2t-2$. 

We define the length of the expansion
$2^t= \sum_i 2^{b_i}$ to be the number of $b_i$ occuring (including zeros).

\begin{lem}
Fix an integer $m\geq 4$. Let $\cM_r$ be the
set of all expansions of $2^m$ of length $r$. Then 
$$|\cM_{2m-2}|\leq |\cM_{2m-1}|$$ 
\end{lem}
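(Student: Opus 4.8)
The plan is to prove the inequality by exhibiting an explicit injection $\phi\colon \cM_{2m-2}\hookrightarrow \cM_{2m-1}$. I identify each expansion of $2^m$ with its multiplicity vector $(c_0,c_1,\dots)$, where $c_j$ counts the parts equal to $2^j$ and the length is $\sum_j c_j$. The map $\phi$ should increase the length by exactly one, using a single split $2^{j}\to 2^{j-1}+2^{j-1}$. To prove injectivity I would construct a one-sided inverse: a merge map $\rho$ (replacing $2^{i}+2^{i}$ by $2^{i+1}$) with $\rho\circ\phi=\mathrm{id}$, since a map admitting a left inverse is injective.

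The enabling observation, worth isolating as a sub-lemma, is that in any expansion of $2^m$ of length at least $2$ the smallest part occurs with \emph{even} multiplicity. Indeed, if $b=\min\{j:c_j>0\}$ then the length exceeds $1$ forces $b<m$, and reducing $c_b2^b=2^m-\sum_{j>b}c_j2^j$ modulo $2^{b+1}$ forces $c_b$ to be even, hence $c_b\ge 2$. Consequently ``merge the two smallest parts'' is always well defined, and it is the natural candidate for $\rho$. When $E$ has no part equal to $1$, I set $\phi(E)$ to split the (even-multiplicity) smallest part $2^{b}$ into two copies of $2^{b-1}$; then the two smallest parts of $\phi(E)$ are exactly the new copies, so $\rho$ recovers $E$. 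The same works when both $1$'s and $2$'s occur, by splitting a $2$.

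The main obstacle is the remaining case $c_0\ge 2,\ c_1=0$ (several $1$'s but no $2$'s; here a divisibility check gives $c_0\ge 4$). For such $E$ the smallest part is $1$, which can only be created by splitting a $2$, so after splitting any part of level $\ge 2$ the merge $\rho$ re-merges two $1$'s instead of undoing the split, and the retraction fails. In fact no purely local split rule is injective: with $p=2$, $m=4$, splitting the $4$ in $\{1,1,1,1,4,8\}$ and splitting a $2$ in $\{1,1,2,2,2,8\}$ both yield $\{1,1,1,1,2,2,8\}$. I therefore expect the crux to be treating these exceptional expansions separately and proving that their $\phi$-images can be arranged disjoint from the generic images — equivalently, showing the ``merge toward the top'' is essentially surjective off the family $\{c_0\ge2,\ c_1=0\}$ and absorbing that family by a second injection. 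This disjointness bookkeeping at the two lowest levels $0$ and $1$ is where the real work lies.

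As a cross-check I would also set up the discrete-derivative recursion, but I do not expect it to close. Writing $b(N,r)$ for the number of expansions of $N$ of length $r$, the elementary recursion $b(N,r)=b(N-1,r-1)+b(N/2,r)$ for even $N$ (peel off a $1$, or halve an all-even expansion) gives, for $\Delta(N;r):=b(N,r+1)-b(N,r)$, the identity $\Delta(N;r)=\Delta(N-1;r-1)+\Delta(N/2;r)$. One would hope to deduce $\Delta(2^m;2m-2)\ge 0$ inductively as a sum of nonnegative terms, but the halving branch fixes $r$ while shrinking $N$, reaching nodes such as $(8,8)$ where $\Delta(8;8)=b(8,9)-b(8,8)=-1<0$; there is genuine cancellation, so the recursion alone does not suffice and the injection is the route I would pursue.
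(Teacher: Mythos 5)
Your setup is sound as far as it goes: the parity sub-lemma (in any expansion of $2^m$ of length at least $2$ the smallest part occurs with even multiplicity) is correct, the merge map $\rho$ is therefore well defined on $\cM_{2m-1}$, and the retraction identity $\rho\circ\phi=\mathrm{id}$ does prove injectivity of $\phi$ on those expansions having either no part equal to $1$ or at least one part equal to $2$. But the proposal stops exactly where the proof has to be finished. The family $c_0\geq 4$, $c_1=0$ is not handled; you say only that the crux is to inject it with images disjoint from the generic ones, and your own example ($\{1,1,1,1,4,8\}$ with its $4$ split, versus $\{1,1,2,2,2,8\}$ with a $2$ split, both giving $\{1,1,1,1,2,2,8\}$) shows that the obvious completion --- split the smallest part of level at least $2$ --- collides with your generic images. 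So what you have is a correct reduction of the lemma to a residual combinatorial problem, together with evidence that the residual problem is not trivial: a genuine gap, not a verification left to the reader.

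You should also know that your suspicion that ``no purely local split rule is injective'' applies to the paper's own argument. The paper splits one copy of the \emph{largest} part $2^{c_1}$ into $2^{c_1-1}+2^{c_1-1}$ and asserts that the resulting map $\Psi$ is clearly one-to-one. It is not: for $m=4$ the expansions $8+2+2+2+1+1$ and $4+4+4+2+1+1$ both lie in $\cM_{6}$, and $\Psi$ sends both to $4+4+2+2+2+1+1$ (the first through the case $r_1=1$, $c_1-1>c_2$, the second through the case $r_1=3$, $c_1-1=c_2$; the parenthetical ``$r_2+2>2$'' does not separate them, because one cannot tell from the image whether its top multiplicity-two block is the leftover $c_1^{r_1-1}$ or the newly created $(c_1-1)^2$). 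Since $|\cM_6|=|\cM_7|=4$ in that case, the statement itself survives, but neither your sketch nor the printed proof currently establishes it. A correct argument has to resolve the ambiguity at one end of the partition globally --- at the top for the paper's map, at the bottom for yours --- rather than by a single local move; identifying that obstruction, as you have done, is necessary but not yet a proof.
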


%{\tt perhaps formulate as general as possible}

\begin{proof}
 We will define  a map $\Psi: \cM_{2m-2} \to \cM_{2m-1}$
and show that it is 1-1.

We claim that $b_1$ (the first term of the partition) must be at least
$2$. 
If not then all $b_i$ are at most $1$ and it follows that
$$2^m = \sum 2^{b_i} \leq (2m-2)\cdot 2$$
and therefore $m\leq 3$, which contradicts the assumption.
Hence define
$\Psi(\sum_i 2^{b_i})$ to be the 
expansion obtained from $\sum 2^{b_i}$ when  replacing $2^{b_1}$ by
$2^{b_1-1} + 2^{b_1-1}$. We claim that the map is 1-1. 

To do so, we write the  partition $\beta = (b_i)$ as  
$(c_1^{r_1}, c_2^{r_2}, \ldots, c_w^{r_w})$ with $c_1 > c_2 > \ldots>  c_w \geq 0$ 
(so that $r_i\geq 1$ and $\sum r_i = 2m-2$). 

With this, 
$$\Psi(c_1^{r_1}, c_2^{r_2}, \ldots, c_w^{r_w})
= \left\{\begin{array} {ll} (c_1^{r_1-1}, (c_1-1)^2, c_2^{r_2}, \ldots ) 
& c_1-1 > c_2 \cr
(c_1^{r_1-1}, c_2^{r_2+2}, \ldots) & c_1-1=c_2
\end{array}
\right.
$$
Then it is easy to see that the map $\Psi$ is 1-1 (note that $r_2+2>2$). 
\end{proof}

\begin{proof}[Proof of Proposition \ref{prop:dim}]
 We show by induction on $t$ 
that
$$
\left.
\begin{aligned}
&\mbox{(a)\ The number of expansions of $2^t$ of length
  $2t-2$ is $\geq 2^{t-2}$, and\ }\\
&\mbox{(b)\ The number of expansions of $2^t$ of length $2t-1$
    is $\geq 2^{t-2}$.}
\end{aligned}
\right\}
{\bf (P_t)}  
$$

We start with $t=2$. Then we have the expansions
$$4 = 2^1 + 2^1, \ \ 4 = 2^1 + 2^0 + 2^0
$$
and $2^{t-2}=1$.

For the inductive step, assume ${\bf (P_m)}$ holds for $2\leq m <
t$. Now consider
expansions of $2^t$ of length $2t-2$, ie we want to verify part (a)
of ${\bf (P_t)}$ (then (b) will follow, by the Lemma).

\begin{enumerate}
\item[(1)] We have $2^t= 2^{t-1} + 2^{t-1}$. For each expansion 
of $2^{t-1}$ of length $2t-3$ we have one of $2^t$ of length $2t-2$, by taking
$b_1=t-1$. Different such expansions of $2^{t-1}$ give different
expansions of $2^t$. Note that $2t-3= 2(t-1)-1$. Hence by induction 
(using (b) for $(P_{t-1})$)  the number of such expansions is $\geq 2^{t-3}$. 
\item[(2)]
  Next, we have $2^t= 2^{t-2} + 2^{t-2} + 2^{t-2} + 2^{t-2}$. For
each expansion of $2^{t-2}$ of length $2t-5$ we get one 
expansion of $2^t$, by taking $b_1=b_2=b_3 = 2^{t-2}$, and again
different  expansions of $2^{t-2}$ give different expansions of $2^t$. 
Note that $2(t-2)-1 = 2t-5$ and hence by induction the number of such
expansions is $\geq 2^{t-4}$. 
\end{enumerate}

We continue this way. In step $s$, we write
$$2^t = [2^{t-2}+ 2^{t-2}] + [2^{t-3}+2^{t-3}] + [2^{t-4} + 2^{t-4}] + \ldots
+ [2^{t-s}+2^{t-s} + 2^{t-s} + 2^{t-s}]
$$
This is an expansion of length  $2s$. We replace the last term $2^{t-s}$
by an expansion into $2(t-s)-1$ terms and this gives an expansion of
$2^t$ of length $2s-1+2(t-s)-1 = 2t-2$. By induction the number of distinct
such expansions is $\geq 2^{t-s-2}$. 

These expansions are different from earlier expansions, as one sees by comparing
the largest exponents.

At step $s=t-2$ we replace $2^2$ by expansions of length $3$ and there is
just $1$ (and $1= 2^{0}$). 
We do one more step, and replace $2$ by expansions of length $1$, and there
is just one such expansion.

In total we have produced a list of distinct expansions
of $2^t$ of length $2t-2$, and the number of these expansions
in our list is at least
$$2^{t-3} + 2^{t-4} + \ldots + 2^2 + 2 + 1 + 1
= 2^{t-2}.
$$
This proves the Proposition.
\end{proof}

\begin{rem}\normalfont Using section 4 in \cite{erd1} one gets dimensions
of Ext groups between some Weyl modules for $G$, an arbitary algebraic
group.
Namely, if $\lambda, \mu$ are dominant 
weights such that $\mu - \lambda$ differ by $2d\alpha$ where
$\alpha$ is a simple root, and $d\geq 0$ then
$$\Ext^m_{G}(\D(\lambda), \D(\mu)) \cong 
\Ext^m_{SL_2}(\D(\langle \lambda, \alphac\rangle ), \D(\langle \mu,
\alphac\rangle))
$$
This then shows that
$$
\max \{ \dim \Ext^i_{G}(\D(\lambda),
\D(\mu))\mid \lambda, \mu  \in X^+, i  \in \N \}$$
 has (at least) exponential growth for all $p$.
\end{rem}

\section{Bounding $\Ext$}\label{sect:boundext}

In this section we show that when we fix $n$ and $p$, the dimensions 
of $\Ext^n(k,
\Delta(2d))$ have a bound independent of $d$.

\subsection{}
We fix a prime $p$ and 
consider the dimensions of ${\rm Ext}^n(k, \Delta(2d))$ for a fixed $n$ 
as $d$ varies. 
We have two cases.
\begin{enumerate}
\item[$p=2$:] The dimension is equal to the number of partitions
 $(b_0, b_1, \ldots, b_n)$ with $b_0\geq b_1\geq \ldots \geq b_n\geq 0$ such
that 
$\sum 2^{b_i} = d+1$. 

\item[$p>2$:] 
The dimension is bounded above by the number
of compositions $(m_1, \ldots, m_l)$ whose length $l$ satisfies
\begin{enumerate}
\item[(i)] if $n=2t-1$, then $t\leq l\leq 2t(=n+1)$, 
\item[(ii)] if $n=2t$, then $t+1\leq l\leq 2t+1 (=n+1)$
\end{enumerate}
and such that $\sum p^{m_i} = d+1$. 
\end{enumerate}

That is we need to to understand the following set:
Let $p\geq 2$ be a prime.
Fix $n$, and take some number $l\leq n+1$,
then for each $d\in \N$, let
$$\cN_d:= \{  (m_1, \ldots, m_l): 0\leq m_j, \sum_j p^{m_j} = d+1\}
$$
We now show that  this set is bounded.

\begin{prop} The size  $|\cN_d|$ is bounded in terms of $n$, independent
of $d$. 
\end{prop}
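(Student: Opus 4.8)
The plan is to show that $|\cN_d|$ is bounded independently of $d$ by fixing the length $l$ (which is at most $n+1$) and bounding the number of solutions $(m_1,\ldots,m_l)$ with $0\le m_j$ and $\sum_j p^{m_j}=d+1$. The key observation is that $l$ is bounded by $n+1$, so we only need a bound depending on $l$. Since there are only finitely many choices of $l$ in the allowed range, it suffices to bound $|\cN_d|$ for each fixed $l$; summing over the finitely many $l$ then gives the result.

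First I would argue that for a fixed length $l$, the number of ways to write $d+1$ as a sum of exactly $l$ (not necessarily distinct) powers of $p$ is bounded in terms of $l$ alone. The cleanest way to see this is to pass from compositions to multisets: the value of $\sum_j p^{m_j}$ depends only on how many times each exponent value occurs, so I would count the possible \emph{multisets} $\{m_1,\ldots,m_l\}$ and then multiply by at most $l!$ to recover the ordered compositions. Equivalently, group the terms by writing $d+1 = \sum_e c_e p^e$ where $c_e\ge 0$ is the number of indices $j$ with $m_j=e$, subject to $\sum_e c_e = l$.

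The heart of the argument is that a fixed integer $N=d+1$ has only boundedly many representations $N=\sum_e c_e p^e$ with $\sum_e c_e = l$. Here I would use the base-$p$ expansion of $N$ and a carrying/normalisation argument: any such representation can be reduced to the standard base-$p$ expansion by repeatedly replacing $p$ copies of $p^e$ by one copy of $p^{e+1}$ (each such move strictly decreases the total count $\sum_e c_e$ by $p-1$). Running this reduction, the total number of terms can only decrease, so a representation with total count $l$ lies within a bounded ``distance'' of the unique base-$p$ expansion, and the number of representations reachable from a given expansion in at most $l$ backward steps is bounded purely in terms of $l$ and $p$. Since each backward step (splitting one $p^{e+1}$ into $p$ copies of $p^e$) has boundedly many choices at each stage and there are at most $l$ splits available, the count of representations with a fixed total $l$ is bounded independently of $N$.

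The main obstacle I expect is making the ``bounded distance from the base-$p$ expansion'' argument precise: one must verify that the normalisation terminates at the genuine base-$p$ expansion regardless of the starting representation, and that only finitely many representations can normalise to it while keeping the total count at the fixed value $l$. The cleanest formulation is probably to note that each exponent $e$ appearing satisfies $p^e \le N$, so $e \le \log_p N$; but since we also need independence of $N$, the real content is that although the individual exponents may be large, the \emph{relative gaps} and the multiplicities $c_e$ are constrained by $\sum_e c_e = l$, giving at most $l$ nonzero $c_e$, each at most $l$. Combined with the carrying relations that force the pattern of $(c_e)$ to match the digits of $N$ up to boundedly many local modifications, this yields a bound depending only on $l$ (hence on $n$ and $p$), completing the proof after summing over the finitely many admissible $l$.
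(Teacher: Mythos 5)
Your argument is correct, but it takes a genuinely different route from the paper's. The paper first passes from ordered tuples to a multiplicity vector $(a_i)$ attached to strictly increasing exponents, and then inducts on the number of terms: if no $a_i$ is divisible by $p$, comparing the $p$-adic valuations of two representations and stripping off lowest terms shows the representation is unique; if some $a_i$ is divisible by $p$, writing $a_i=b_ip^{u_i}$, reordering, and recombining equal powers produces a representation with strictly smaller total multiplicity, to which the inductive hypothesis applies (at the cost of an $r!$ factor for the reorderings). Your normalisation-by-carrying argument replaces this induction with a single direct count: every representation of $d+1$ as a sum of $l$ powers of $p$ carries down to the unique base-$p$ expansion; the number of carries is exactly $(l-\sigma)/(p-1)\le l$, where $\sigma$ denotes the base-$p$ digit sum of $d+1$; and at every intermediate stage the number of exponents with nonzero multiplicity is at most the running total, which is at most $l$, so there are at most $l$ choices for each of at most $l$ backward splits. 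This yields the explicit bound $|\cN_d|\le l!\cdot l^{l}$ (the $l!$ converting multisets back to ordered tuples), which is more quantitative than the paper's implicit bound, and it also makes visible the exact nonemptiness conditions $\sigma\le l$ and $\sigma\equiv l\pmod{p-1}$. The one point you flag as needing care --- that every length-$l$ representation is reachable from the base-$p$ expansion by backward splits --- closes routinely: the carrying process terminates, its fixed points are exactly the representations with all multiplicities $<p$, and by uniqueness of base-$p$ expansions there is only one such, so reversing any terminating carry sequence gives the required split sequence.
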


\begin{proof}
Firstly we reformulate the problem. We can write
$$\sum_{j=1}^l p^{m_j}
\ = \ \sum_{i=1}^r a_ip^{s_i} 
$$
where  $0 < a_i$ is the number of times $m_j =s_i$, and 
$0\leq s_1<s_2< \ldots < s_r$. Then  $\sum a_i = l$. 

Fix 
$(a_i)$ with $0< a_i$ and $\sum a_i=l$, and define
$$\mathcal{N}_d(a_i):= \{ (s_1, s_2,  \ldots, s_r): \sum_{i=1}^r a_ip^{s_i}=d+1,
0\leq s_1<s_s< \ldots < s_r\}
$$
Then we have $\cN_d = \bigcup \cN_d(a_i)$, the union over all such $(a_i)$. 
For a fixed $n$, the number of ways writing $\sum a_i = l$ where $l\leq n+1$ with
$0< a_i$ is bounded in terms of $n$.  So it suffices to show the following.

We now claim that 
for
 any  $(a_i)$ the 
size of $\cN_d(a_i)$   is bounded in terms of $n$, independent of $d$.

We show this by  induction on $n$; the case  $n=1$ is clear.
For the inductive step, we consider first the case when 
$p$ does not divide $a_i$ for any $i$. 

We first show that if  $p$ does not divide $a_i$ for all $i$
 then $|\mathcal{N}_d(a_i)|\leq 1$.

Suppose the set is not empty. Assume $ (t_i)$ and $(j_i)$ are
in $\mathcal{N}_d(a_i)$, so that
$$d+1 = \sum_i a_ip^{t_i}  = \sum_i a_ip^{j_i}
$$
and, say, $t_1 \leq j_1$. The p-part on the LHS is $p^{t_1}$ and
the p-part on the RHS is $p^{j_1}$. So $t_1=j_1$. 
Subtract the lowest term, and repeat the argument. 
This shows $t_i=j_i$ for all $i$.

Now let's assume that
at least one  $a_i$ is divisible by $p$.  Write
$a_i = b_ip^{u_i}$ with $b_i$ not divisible by $p$,  and then
$u_i>0$ for some $i$ and therefore $\sum b_i < \sum a_i$, and as well
$b_i>0$. So 
we can then  use the inductive hypothesis.
To do so, we must arrange the sum by increasing p-powers:

If $(j_1, \ldots, j_r)$ is in $\mathcal{N}_d(a_i)$ then 
there is a permutation $\pi$ of $r$ such that
$$u_{\pi(s)}+j_{\pi(s)} \leq u_{\pi(s+1)} + j_{\pi(s+1)} \ \ (1\leq s\leq r)
\leqno{(\dagger)}$$
so by 
permuting with $\pi$ we can make the powers of $p$ weakly increasing.
Then we can combine summands for the same power of $p$, this gives new
coefficients of the form $f_{\tau}$ where each $f_{\tau}$ is a sum of
some of
the $b_i$, and $\sum f_{\tau} = \sum b_i$. 

Explicitly, 
write $c_i = b_{\pi(i)}$, and write 
$$(\sum a_ip^{j_i} = ) \ \sum_i c_i p^{u_{\pi(i)}+ j_{\pi(i)}} =
\sum_{\tau} f_{\tau} p^{s_{\tau}}
\leqno{(*)}
$$
where $f_{\tau}$ is the sum of all $c_i$ such that
$u_{\pi(i)} + j_{\pi(i)} = s_{\tau}$ and where the $s_{\tau}$ are strictly 
increasing. Then  $(s_{\tau})$ belongs to 
the set $\cN_d(f_{\tau})$. Now note that $\sum f_{\tau} = \sum_i c_i < l$. By 
induction, the size of $\cN_d(f_{\tau})$ is bounded in terms of $l$,
independent of $d$.
Hence for each permutation $\pi$ the set of expansions with 
$(\dagger)$ is bounded in terms of $n$. The number of permutations needed
is at most $r!$ and $r\leq n$, hence the size of $\cN_d(a_i)$ is bounded
in terms of $n$. 
\end{proof}

\begin{cor}
Fix both $i$ and $p$. Then there is an upper bound for the
 dimension of $\Ext^i(k,\Delta(a))$ for any $a \in \N$.
\end{cor}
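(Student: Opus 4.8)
The plan is to assemble the Corollary directly from the two dimension statements recorded at the start of this section together with the Proposition just proved, so that the remaining content is purely a matter of recognising the relevant counting quantities as sets of the form $\cN_d$. Write $n=i$ for the fixed degree. First I would dispose of odd weights: by the block considerations recalled in Section~\ref{sect:somerec}, $\Ext^n(k,\Delta(a))=0$ unless $a$ is even, so it suffices to bound $\dim\Ext^n(k,\Delta(2d))$ as $d$ ranges over $\N$.

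For $p=2$, Corollary~\ref{cor:p2dimext} identifies this dimension with the number of weakly decreasing tuples $(b_0,\ldots,b_n)$ satisfying $\sum_i 2^{b_i}=d+1$. Each such partition is in particular a composition of length exactly $n+1$ with $\sum_j p^{m_j}=d+1$ (taking $p=2$), so the count is at most $|\cN_d|$ taken with $l=n+1$. By the Proposition this is bounded in terms of $n$, independently of $d$.

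For $p>2$, the dimension is bounded above by the number of compositions $(m_1,\ldots,m_l)$ with $\sum_j p^{m_j}=d+1$ whose length $l$ lies in the admissible range ($t\le l\le 2t$ when $n=2t-1$, and $t+1\le l\le 2t+1$ when $n=2t$). For each fixed admissible $l$ this count is exactly $|\cN_d|$, which the Proposition bounds in terms of $n$ independently of $d$; since there are only finitely many admissible lengths $l$, all $\le n+1$, summing the finitely many bounds gives a single bound on $\dim\Ext^n(k,\Delta(2d))$, again independent of $d$.

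Finally, taking the maximum of the (finitely many) bounds produced above yields a constant depending only on $i$ and $p$ that bounds $\dim\Ext^i(k,\Delta(a))$ for all $a\in\N$. I do not anticipate a genuine obstacle here: the Proposition has already done the hard work of controlling $|\cN_d|$ uniformly in $d$, and the only points requiring care are the elementary observations that an ordered partition is a special composition (so that the $p=2$ count is dominated by a composition count) and that the $p>2$ bound involves only finitely many lengths $l\le n+1$, so that a finite sum of uniform bounds remains uniform.
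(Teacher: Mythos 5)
Your proposal is correct and follows essentially the same route as the paper, which states the corollary without a separate proof precisely because it is the immediate consequence of the two dimension descriptions at the start of this section together with the Proposition bounding $|\cN_d|$. Your only additions (reducing to even weights by block considerations, noting that the $p=2$ partition count is dominated by a composition count, and summing over the finitely many admissible lengths $l\le n+1$) are exactly the elementary bookkeeping the paper leaves implicit.
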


\subsection{}
These results lead to several intriguing questions.
Firstly, 
it would be interesting to determine the general behaviour of the
$\Ext$ groups and not just cohomology. In the $\SL_2$ case, we would
not expect the behaviour to be much different.
This is because $\Ext$ groups are essentially the same as cohomology.
Recall from 
\cite[corollary 5.2]{parker3} for $a,b, i \in \N$ and $0 \le i\le p-2$:
$$
\Ext^m(\Delta(pa+i), \Delta(pb +i )) \cong 
\begin{cases}
\Ext^{m}(\Delta(0), \Delta(p(b-a)))  & \mbox{if $b-a$ is even} \\
0 & \mbox{otherwise}.
\end{cases}
$$
and
\begin{multline*}
\Ext^m(\Delta(pa+i), \Delta(pb+p-i-2) )\\ \cong 
\begin{cases}
\Ext^{m-1}(\Delta(0), \Delta(p(b-a-1)) ) 
\oplus \Ext^m(\Delta(a), \Delta(b-1)) & \mbox{if $b-a$ is odd} \\
0 & \mbox{otherwise}.
\end{cases}
\end{multline*}
Thus for weights $pa+i$, $pb+i$ with even difference, their $\Ext$
group is a cohomology group. 
For weights $pa+i$, $pb+p-i-2$ with odd difference, their $\Ext$ 
is almost a cohomology group but with an ``error term''
$\Ext^m(\Delta(a), \Delta(b-1))$. But this group is either zero, or
can be written in terms of lower cohomology groups inductively. While
we have not explored this further, this should not affect the growth
rate significantly compared to the growth in $\Ext^{m-1}(k,
\Delta(p(b-a-1))$ so we would not expect significantly different
results for more general $\Ext$ groups between Weyl modules.

This phenomenon of being essentially determinable from cohomology
groups may not continue for more general algebraic groups. So
it's still possible for greater than exponential growth over all the $\Ext$
groups between Weyl modules for larger algebraic groups.

We may also considering bounding $\Ext$ groups between Weyl modules
for more general algebraic groups. I.e. is there a bound $c \in \N$
for which 
$\dim \Ext_G^i(\Delta(\lambda), \Delta(\mu)) \le c$ for fixed $i$ and
any $\lambda$, $\mu$ a dominant weight?
We have no feeling for this, but note that the bounds that work for
$\SL_2$ are known not to work for $\SL_3$ and hence for larger
algebarics groups. 
Although all
the $\Hom$ spaces are $1$-dimensional for $\SL_3$,
there are
$2$-dimensional $\Ext^1$ groups between Weyl modules. 
Also, it is known that there are larger than $1$-dimensional $\Hom$ spaces
between Weyl modules for larger algebraic groups. Thus the question of
boundedness is very much wide open.

%\bibliographystyle{amsplain}
%\bibliography{../references}

\providecommand{\bysame}{\leavevmode\hbox to3em{\hrulefill}\thinspace}
\providecommand{\MR}{\relax\ifhmode\unskip\space\fi MR }
% \MRhref is called by the amsart/book/proc definition of \MR.
\providecommand{\MRhref}[2]{%
  \href{http://www.ams.org/mathscinet-getitem?mr=#1}{#2}
}
\providecommand{\href}[2]{#2}

\end{document}